\documentclass[11pt, a4paper]{amsart}

\usepackage[utf8]{inputenc}

\usepackage{graphicx}
\usepackage{caption}
\usepackage{amsmath}
\usepackage{amssymb}
\usepackage{amsbsy}
\usepackage{enumitem}
\usepackage{amsfonts}
\usepackage[english]{babel}
\usepackage{amsthm}

\usepackage[T1]{fontenc}
\usepackage{appendix}
\usepackage{graphicx}
\usepackage{tikz-cd}
\usepackage{cite}
\usepackage{pifont}
\usepackage{dsfont}
\usepackage{psfrag}

\usepackage[right=2.5cm, left=2.5cm, top=2.5cm, bottom=2.5cm]{geometry}

\title{Singularities admitting contracting automorphisms}

\author{Kémo Morvan}
\address{Université Paris Cité, Sorbonne Université, CNRS, IMJ-PRG, F-75013 Paris, France.}
\email{kmorvan@imj-prg.fr}

\subjclass{Primary 32S05, 37F99; Secondary 32B10, 14B05, 32A10.}

\keywords{Contracting automorphisms, quasi-homogeneous singularities.}

\theoremstyle{plain}

\newtheorem{mainthm}{Theorem}

\newtheorem{thm}{Theorem}[section]

\newtheorem{lem}[thm]{Lemma}
\newtheorem{prop}[thm]{Proposition}

\theoremstyle{definition}

\newtheorem{definition}[thm]{Definition}
\newtheorem{rems}[thm]{Remarks}
\newtheorem{rem}[thm]{Remark}
\newtheorem{ex}[thm]{Example}

\newcommand{\CC}{\mathbb{C}}

\newcommand{\NN}{\mathbb{N}}
\newcommand{\ZZ}{\mathbb{Z}}

\newcommand{\Ideal}{\mathcal{I}}
\newcommand{\Jdeal}{\mathcal{J}}
\newcommand{\Kdeal}{\mathcal{K}}
\newcommand{\Alp}{\boldsymbol{\alpha}}
\newcommand{\Bet}{\boldsymbol{\beta}}

\newcommand{\Ho}{\boldsymbol{H}}

\newcommand{\abs}[1]{\left|#1\right|}

\begin{document}
	
	\maketitle
	
	\begin{abstract}
		Given a complex analytic singularity $(X,0)$, we show that if there exists an automorphism $F : (X, 0) \to (X, 0)$ that is contracting, then $(X,0)$ is quasi-homogeneous.
	\end{abstract}
	
	\section{Introduction}\label{sect:Intro}
	\quad \quad Let $(X, 0)$ be a complex singularity, that is a germ of complex analytic space. One may wonder which geometrical conditions of $(X, 0)$ we can capture by studying its group of complex analytic automorphisms or endomorphisms. In fact, as Müller proved in 1987, the group of automorphisms of the singularity is always ``infinite dimensional'', in the sense that $(X,0)$ carries $m$ analytic vector fields that are in mutual involution and linearly independent for any $m \geq 1$ \cite{Muller}. We may wonder nevertheless if the existence of ``special'' endomorphisms would restrict the structure of the singularity. For example, Wahl proved that if there exists a non-invertible and finite self-map on a germ of complex surface, then up to a finite cover it is either smooth, simple elliptic or a cusp singularity \cite{Wahl}. 
	
	In this paper, we are interested in understanding the structure of a singularity admitting a contracting automorphism, see definition \ref{def:contr}. This question has previously been investigated by Favre and Ruggiero \cite{FavreRuggiero} in dimension $2$. In op. cit., they proved that a normal surface singularity can support a contracting automorphism if and only if it is quasi-homogeneous, see definition \ref{def:qhs}. A similar result was obtained by Camacho, Movasati and Scárdua \cite{CMS} when they studied Stein analytic spaces of dimension 2 admitting a $\mathbb{C}^*$-action with isolated singularities and at least one dicritical singularity.
	
	These results are inspired by the seminal work of Orlik and Wagreich \cite{OrlikWagreich}, where they proved that quasi-homogeneous singularities, in any dimension, are the only ones that admit a good $\mathbb{C}^*$-action. Such an action induces a family of contracting automorphisms on the singularity, but not all contracting automorphisms can be obtained in this way. In this paper, we generalize these results to contracting automorphisms on an arbitrary singularity:
	
	\begin{mainthm}
		Let $(X, 0)$ be a complex singularity and $f: (X,0) \to (X, 0)$ be a contracting automorphism. Then $(X, 0)$ is a quasi-homogeneous singularity.
	\end{mainthm}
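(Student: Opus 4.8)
The plan is to reduce the problem to a statement about a contracting germ of the ambient space, put that germ into Poincaré--Dulac normal form, and then read off a good $\CC^*$-action from the semisimple part of its differential by an elementary algebraic-group argument. First, embed $(X,0)\hookrightarrow(\CC^n,0)$ with $n$ the embedding dimension, so that the canonical surjection $\mathfrak m/\mathfrak m^2\to\mathfrak m_X/\mathfrak m_X^2$ is an isomorphism, and choose ambient coordinates $x_1,\dots,x_n$ whose restrictions minimally generate $\mathfrak m_X$. Lifting $f^*(x_i|_X)\in\mathfrak m_X$ to germs $\widetilde f_i\in\mathfrak m\subset\CC\{x\}$ yields $F=(\widetilde f_1,\dots,\widetilde f_n):(\CC^n,0)\to(\CC^n,0)$ with $F(X)=X$ and $F|_X=f$; minimality of the embedding identifies $dF_0$ with $df_0$, so $F$ is an automorphism of $(\CC^n,0)$ whose differential $A:=dF_0$ has all eigenvalues $\lambda_1,\dots,\lambda_n$ of modulus $<1$. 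This step just serves to realise $f$ inside an honest ambient germ.

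Next, apply the Poincaré--Dulac theorem. Since $|\lambda_i|<1$ for all $i$, a resonance $\lambda^m=\lambda_j$ with $|m|\ge2$ forces $\prod_i|\lambda_i|^{m_i}=|\lambda_j|$, hence $|m|$ is bounded by an explicit constant $d$; thus there are finitely many resonances and $A$ lies in the Poincaré domain. Consequently, after an analytic change of coordinates --- which replaces $(X,0)$ by an isomorphic germ, still written $(X,0)$ --- the germ $F$ becomes a \emph{polynomial} automorphism $N$ of $\CC^n$, of degree $\le d$, commuting with the semisimple part $A_s$ of $A$, which we take diagonal, $A_s=\mathrm{diag}(\lambda_1,\dots,\lambda_n)$, and with $N(X)=X$. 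Any polynomial automorphism of $\CC^n$ commuting with $A_s$ has components supported on resonant monomials, hence degree $\le d$; such maps form a group $\mathcal G$ acting faithfully on the finite-dimensional algebra $\CC\{x\}/\mathfrak m^{d+1}$, which exhibits $\mathcal G$ as a linear algebraic group. In $\mathcal G$ the factorisation $N=A_s\cdot M$, with $M=A_s^{-1}N$ unipotent and commuting with $A_s$, is the Jordan decomposition of $N$.

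The decisive point is to produce the $\CC^*$-action. One checks that $\mathrm{Stab}_{\mathcal G}(X):=\{P\in\mathcal G:P^*\Ideal(X)=\Ideal(X)\}$ is Zariski closed in $\mathcal G$: it is the intersection over $k\ge1$ of the closed subgroups stabilising the finite-dimensional subspaces $(\Ideal(X)+\mathfrak m^k)/\mathfrak m^k$, and by Krull's intersection theorem the equalities $P^*\Ideal(X)+\mathfrak m^k=\Ideal(X)+\mathfrak m^k$ for all $k$ force $P^*\Ideal(X)=\Ideal(X)$. Since $N$ lies in this algebraic subgroup, so do its Jordan components; in particular the diagonal map $A_s$ satisfies $A_s(X)=X$. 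Hence the Zariski closure $T=\overline{\langle A_s\rangle}$ --- a diagonalisable subgroup of the diagonal torus, cut out by the lattice $\Lambda=\{m\in\ZZ^n:\lambda^m=1\}$ --- lies in $\mathrm{Stab}_{\mathcal G}(X)$. Since $|\lambda_i|<1$, every $m\in\Lambda$ satisfies $\sum_i m_i\log|\lambda_i|=0$, so the rational subspace $(\mathrm{span}_{\RR}\Lambda)^{\perp}$ of $\RR^n$ contains the strictly positive vector $(-\log|\lambda_1|,\dots,-\log|\lambda_n|)$ and therefore contains an integer vector $w$ with all $w_i>0$. Then $w$ is a cocharacter of $T$, so $\gamma(s)=\mathrm{diag}(s^{w_1},\dots,s^{w_n})$ is a $\CC^*$-action on $\CC^n$ with strictly positive weights preserving $X$; being $\gamma$-invariant, $\Ideal(X)$ is generated by weighted-homogeneous polynomials, so $(X,0)$ is quasi-homogeneous.

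I expect the real obstacle to be exactly the passage that lets algebraic group theory act on a purely analytic germ: Poincaré--Dulac is what cuts $F$ down to a polynomial automorphism of bounded degree, sitting in a genuine linear algebraic group $\mathcal G$, and the Krull-intersection argument is what keeps $\mathrm{Stab}_{\mathcal G}(X)$ Zariski closed, so that the Jordan decomposition of $F$ is available and its diagonal part already stabilises $(X,0)$. The remaining steps --- finding a positive cocharacter in a subtorus whose defining relations annihilate a positive vector, and recognising a positively graded ideal as quasi-homogeneous --- are routine.
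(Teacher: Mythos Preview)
Your algebraic-group approach is elegant, but the first step hides the chief analytic difficulty. You write that ``minimality of the embedding identifies $dF_0$ with $df_0$, so $F$ is an automorphism of $(\CC^n,0)$ whose differential $A:=dF_0$ has all eigenvalues of modulus $<1$.'' The identification of $dF_0$ with the action of $f^*$ on $\mathfrak{m}_X/\mathfrak{m}_X^2$ does follow from minimality, and it makes $F$ an automorphism; but nothing you have said shows that a contracting automorphism of a \emph{singularity} acts on its Zariski cotangent space with eigenvalues of modulus $<1$. This is not a definition-chase: not every Zariski tangent vector of $(X,0)$ is the tangent of an arc on $X$ (think of the Whitney umbrella), so one cannot simply test $df_0$ on arcs, and there is no stable-manifold theorem available on the singular space $X$ itself. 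The paper devotes Theorem~\ref{thm:GoodEmb} to exactly this point: assuming some $|\lambda_i|\geq 1$, it straightens the holomorphic stable manifold of $F$ in $(\CC^n,0)$, uses minimality of the embedding to find an arc $\phi$ on $X$ with a nontrivial component transverse to that manifold, tracks the leading coefficient of a suitable coordinate of $F^m\circ\phi$ by an induction on $m$, and then a Cauchy estimate combined with the uniform contraction of $f$ on $X$ (Proposition~\ref{prop:CUC}) forces $|\lambda_l|<1$. This argument, or something equivalent, is the missing ingredient in your proposal; without it Poincar\'e--Dulac is unavailable and the rest of your construction does not start.

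Once that gap is filled, your route through Jordan decomposition is a genuine and attractive alternative to the paper's Theorem~\ref{thm:InvId}. There the paper grades $\CC\{x\}$ by the monoid $\lambda^{\NN^d}$ and runs a double induction to replace a minimal generating set $\phi^{(1)},\ldots,\phi^{(r)}$ of $\Ideal$ by specific $\lambda$-homogeneous pieces $P^{(i)}=\phi^{(i)}_{\gamma_i}$ with $\gamma_i=\log_\lambda(A^i_{i,0})$, and then argues separately that these $P^{(i)}$ already generate $\Ideal$. Your argument instead observes that the normal form $N$ lives in a linear algebraic group $\mathcal{G}$, that $\mathrm{Stab}_{\mathcal{G}}(X)$ is Zariski closed by a Krull-intersection argument, so the semisimple part $A_s$ of $N$ already preserves $X$, and then extracts a strictly positive cocharacter of $\overline{\langle A_s\rangle}$ from the orthogonal of the relation lattice $\{m:\lambda^m=1\}$. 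Both approaches reach quasi-homogeneity; yours is more conceptual and produces the good $\CC^*$-action directly, while the paper's combinatorics yield the extra datum of an $F$-invariant flag $\langle P^{(1)},\ldots,P^{(i)}\rangle$ of subideals.
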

	
	The results in \cite{CMS} and in \cite{FavreRuggiero} rely on the precise description of quasi-homogeneous surface singularities, and of the dual graph of their resolution, depicted in \cite{OrlikWagreich}. Such a description is not available in higher dimensions: to prove our main theorem, we go back to the original strategy of \cite{OrlikWagreich}, and adapt it to our setting. It consists in two steps: firstly, we find an embedding of the singularity $(X, 0)$ into $(\CC^d, 0)$ so that the contracting automorphism $f$ is the restriction of a contracting automorphism $F$ of the ambient space, see theorem \ref{thm:GoodEmb}. This allows to use Poincaré-Dulac normal forms up to holomorphic change of coordinates. Lastly, we show that any subvariety of $(\CC^d, 0)$ that is invariant by a contracting automorphism in Poincaré-Dulac normal form must be quasi-homogeneous, see theorem \ref{thm:InvId}.
	
	While any contracting automorphism $f$ of the singularity $(X, 0)$ is the restriction of an endomorphism $F$ of the ambient space, regardless the embedding, the fact that we can pick $F$ to be a contracting automorphism is not trivial and may depend on the embedding chosen. In theorem \ref{thm:GoodEmb}, we completely analyse this problem and show that any extension $F$ is automatically a contracting automorphism if we pick an embedding of $(X, 0)$ realizing the embedding dimension.
	To do so, the main idea in a reasoning by contradiction is to use Cauchy estimates on arcs of the singularity not lying in the stable manifold of the extension $F$.
	
	The first step being achieved, we can now consider a contracting automorphism $F$ on $(\CC^d, 0)$ in Poincaré-Dulac normal form, and study its invariant subvarieties.
	Two main difficulties arise: the presence of resonances (which do not appear in \cite{OrlikWagreich}), and having to work with subvarieties of arbitrary codimension.
	The techniques used here are mainly combinatorial, to show that the ideal defining the invariant subvariety is generated by weighted homogeneous polynomials.
	
	Some of the methods developed here appear independently in the recent work of Bivià-Ausina, Kourliouros and Ruas  \cite{BAKR}, where the authors study the relation between the quasi-homogeneity of a pair $(f, X)$ and the finiteness of its Milnor number. In op. cit., the authors use a famous result of Saito \cite{Saito1971} proving the equivalence between quasi-homogeneity of a singularity and the equality of its Milnor and Tjurina numbers. 
	
	It would be interesting to understand more deeply how these results can be related.

	The paper is organized as follows: in section 2 we introduce the main objects that will be studied and show some useful properties. In section 3 we prove that for any singularity $(X, 0)$, there exists an embedding $\iota : (X, 0) \to (\mathbb{C}^d, 0)$ such that any extension of a contracting automorphism of $(X, 0)$ is a contracting automorphism of $(\mathbb{C}^d, 0)$. In section 4, we show that ideals of holomorphic functions that are invariant under the action of a contracting automorphism are generated by quasi-homogeneous polynomials in some specific coordinates of $(\CC^d, 0)$.

	\subsubsection*{Acknowledgements}
	The author is deeply grateful to André Belotto da Silva and Matteo Ruggiero for their numerous advices during the realization of this project. Many thanks also go to Marie-Claude Arnaud
	for very interesting discussions on the different types of convergence.
	The author is partially supported by the project ``Plan d’investissements France 2030", IDEX UP ANR-18-IDEX-0001.

	\section{Preliminaries}\label{sect:Prel}
	
	In this section, we introduce the main tools that will be used throughout this paper.
	
	\subsection{Contracting automorphisms}\label{ssect:Contr}
	
	As could be seen in the introduction, contracting automorphisms play an important role in this paper.
	
	\begin{definition}[Contracting automorphism]\label{def:contr}
		Let $(X, 0)$ be a complex singularity, and $F: (X, 0) \to (X, 0)$ be an automorphism. We say that $F$ is a \emph{contracting automorphism} if for all open neighbourhood $W$ of $0$ in $X$, there exists an open neighbourhood $U$ of $0$ in $W$ such that:
		\begin{itemize}
			\item $F (U) \Subset U$,
			\item $\bigcap_{n \geq 0} F^{n} (U) = {0}$.
		\end{itemize}
	\end{definition}
	
	As we intend to generalize the result of Favre and Ruggiero \cite{FavreRuggiero}, we state the definition that they used. However we will need a stronger characterisation of contracting automorphisms later:
	
	\begin{definition}[Uniformly contracting automorphism]\label{def:unifcontrauto}
		Let $(X, 0)$ be a complex singularity, and $F: (X, 0) \to (X, 0)$ be an automorphism.
		
		It is \emph{uniformly contracting} if it is contracting and for all $V$ neighbourhood of $0$, there exists $n \in \mathbb{N}$ such that $F^{n} (U) \subset V$.
	\end{definition}
	
	These two definitions of contraction are, in fact, equivalent:
	
	\begin{prop}\label{prop:CUC}
		Let $(X, 0)$ be a singularity and $F : (X, 0) \to (X, 0)$ be a holomorphic diffeomorphism, then $F$ is uniformly contracting if and only if $F$ is contracting.
	\end{prop}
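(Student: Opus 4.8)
The implication ``uniformly contracting $\Rightarrow$ contracting'' is immediate from the definitions, so the whole content lies in the converse. The plan is to exploit the local compactness of the analytic space $X$ together with the condition $\bigcap_{n\geq 0}F^{n}(U)=\{0\}$, and to upgrade it to a uniform statement by means of the finite intersection property for decreasing families of compact sets.

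First I would fix an arbitrary neighbourhood $W$ of $0$ and let $U\subseteq W$ be the neighbourhood provided by the contracting hypothesis, so that $F(U)\Subset U$ and $\bigcap_{n\geq 0}F^{n}(U)=\{0\}$; note that $F$ maps $U$ into $U$, so all iterates $F^{n}$ are defined on $U$. I would then set $K:=\overline{F(U)}$, which is compact with $K\subseteq U$ (this is the meaning of $\Subset$) and $0=F(0)\in F(U)\subseteq K$. Next I would check that $F(K)\subseteq K$: from $F(U)\subseteq U$ one gets $F^{2}(U)\subseteq F(U)$, hence by continuity $F(K)=F(\overline{F(U)})\subseteq \overline{F^{2}(U)}\subseteq\overline{F(U)}=K$. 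Therefore the sets $K_{n}:=F^{n}(K)$ form a decreasing sequence of compact sets, all containing $0$, and since $K\subseteq U$ we have $K_{n}\subseteq F^{n}(U)$, so $\bigcap_{n\geq 0}K_{n}\subseteq\bigcap_{n\geq 0}F^{n}(U)=\{0\}$; as $0\in K_{n}$ for every $n$, in fact $\bigcap_{n\geq 0}K_{n}=\{0\}$.

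The last step is the uniform conclusion. Given any neighbourhood $V$ of $0$, the sets $K_{n}\setminus V$ are compact (closed in $K_{n}$), decreasing in $n$, and have empty intersection since $\bigcap_{n\geq 0}K_{n}=\{0\}\subseteq V$. By compactness of $K_{0}$ and the finite intersection property, there is $N$ with $K_{N}\setminus V=\emptyset$, i.e.\ $F^{N}(K)\subseteq V$; then $F^{N+1}(U)\subseteq F^{N}(\overline{F(U)})=F^{N}(K)\subseteq V$, which is exactly the uniform contraction property. I do not expect a genuine obstacle here: the only points requiring care are bookkeeping ones, namely that $\Subset$ is used throughout with the meaning ``$\overline{F(U)}$ is compact and contained in $U$'' (which relies on $X$ being locally compact, as a complex analytic space) and that iterates of $F$ stay defined on $U$; the argument itself is purely point-set topological, with no analytic input beyond local compactness.
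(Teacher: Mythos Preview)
Your proof is correct and follows essentially the same approach as the paper: both set $K:=\overline{F(U)}$, observe that the $F^{n}(K)$ form a decreasing chain of compacta with intersection $\{0\}$, and use compactness to conclude. The only cosmetic difference is that you invoke the finite intersection property directly, whereas the paper phrases the same step as a contradiction via a sequence $x_n\in F^{n}(K)\setminus V$ accumulating outside $V$.
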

	
	\begin{proof}
		It is easy to prove that an uniformly contracting map is also contracting, so we only need to consider the converse. We will prove it by contradiction.
		
		Let us assume that $F$ is contracting and there exists an open neighbourhood $V$ of $0$ such that for all $n \in \NN$, we have $F^n (U) \nsubseteq V$.
		
		Let $K = \overline{F (U)}$, it is a compact set that satisfies $F (K) \subset K$. We also have for all $n \in \NN$, $F^n (K) \nsubseteq V$, so for all $n$ there exists $x_n \in F^n (K) \setminus V$. The sequence $(x_n)_n$ converges to a point $x \notin V$ which is in $F^n (K)$ for all $n$, so it belongs to the intersection  $\bigcap_{n \geq 0} F^{n} (K) = \bigcap_{n \geq 0} F^{n} (U)$.
	\end{proof}
	
	In the case of a holomorphic diffeomorphism $F : (\mathbb{C}^d, 0) \to (\mathbb{C}^d, 0)$, the contraction condition that we stated above is also equivalent to the eigenvalues of $D_0 F$ being of modulus smaller than $1$ (cf. contraction principle (\cite{MNTU}, p.219)). In section \ref{sect:Ext}, we obtain a similar result for automorphisms on singularities.
	
	\subsection{Embeddings of a singularity}\label{ssect:Emb}
	
	Recall that a singularity germ $(X, 0)$ admits a local embedding $\iota : (X, 0) \hookrightarrow (\mathbb{C}^d, 0)$ for $d$ sufficiently big, that is $\iota$ is a closed immersion. In particular the local ring $\mathcal{O}_{X, 0}$ is isomorphic to $\mathcal{O}_{\mathbb{C}^d, 0} / \Ideal_{\iota}$ for some ideal $\Ideal_{\iota} \subset \mathfrak{m}_{\CC^d, 0} \subset \mathcal{O}_{\mathbb{C}^d, 0}$ where $\mathfrak{m}_{\CC^d, 0}$ is the maximal ideal of $\mathcal{O}_{\CC^d, 0}$. Note that the ideal $\Ideal_\iota$ depends on the embedding.
	
	\begin{definition}[Embedding dimension]\label{def:embdim}
		The \emph{embedding dimension} of $(X, 0)$ is the minimal $d \in \mathbb{N}$ such that there exists an embedding $\iota :(X,0) \to (\mathbb{C}^d, 0)$. 
	\end{definition}
	
	\begin{prop}\label{prop:m2}
		Let $(X, 0)$ be a complex singularity, and $\iota : (X, 0) \to (\mathbb{C}^{d}, 0)$ an embedding of $(X, 0)$ in $(\mathbb{C}^d, 0)$. Then $d$ is the embedding dimension of $(X, 0)$ if and only if $\Ideal_{\iota} \subset \mathfrak{m}_{\CC^d, 0}^2$, where $\mathfrak{m}_{\CC^d, 0}$ is the ideal of holomorphic functions of $(\CC^d, 0)$ vanishing at $0$.
	\end{prop}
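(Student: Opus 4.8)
The plan is to deduce the proposition from the fact that the embedding dimension of $(X,0)$ equals the dimension $e := \dim_{\CC}\mathfrak{m}_{X,0}/\mathfrak{m}_{X,0}^2$ of the Zariski cotangent space, which is an invariant of the analytic algebra $\mathcal{O}_{X,0}$ and in particular does not depend on any embedding. Throughout, abbreviate $\mathfrak{m}:=\mathfrak{m}_{\CC^d,0}$.

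\emph{Step 1: cotangent space of a quotient.} Since $\mathcal{O}_{X,0}\cong\mathcal{O}_{\CC^d,0}/\Ideal_\iota$ with $\Ideal_\iota\subset\mathfrak{m}$, the maximal ideal $\mathfrak{m}_{X,0}$ corresponds to $\mathfrak{m}/\Ideal_\iota$ and $\mathfrak{m}_{X,0}^2$ to $(\mathfrak{m}^2+\Ideal_\iota)/\Ideal_\iota$, so the third isomorphism theorem gives $\mathfrak{m}_{X,0}/\mathfrak{m}_{X,0}^2\cong\mathfrak{m}/(\mathfrak{m}^2+\Ideal_\iota)$. As $\dim_{\CC}\mathfrak{m}/\mathfrak{m}^2=d$, counting dimensions yields
\[
	d \;=\; e \;+\; \dim_{\CC}\frac{\Ideal_\iota+\mathfrak{m}^2}{\mathfrak{m}^2},
\]
where the last term is the dimension of the image of $\Ideal_\iota$ under the linear-part map $\mathfrak{m}\twoheadrightarrow\mathfrak{m}/\mathfrak{m}^2$. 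This term is non-negative, and it vanishes if and only if $\Ideal_\iota\subset\mathfrak{m}^2$; hence $d\geq e$, with equality exactly when $\Ideal_\iota\subset\mathfrak{m}^2$.

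\emph{Step 2: lowering the ambient dimension when $\Ideal_\iota\not\subset\mathfrak{m}^2$.} Pick $f\in\Ideal_\iota$ with nonzero linear part. After a linear change of coordinates on $(\CC^d,0)$ we may assume $\partial f/\partial z_d(0)\neq 0$, so by the holomorphic implicit function theorem the germ $(\{f=0\},0)$ is the graph $z_d=\varphi(z_1,\dots,z_{d-1})$ of a holomorphic function, and the projection forgetting the last coordinate restricts to a biholomorphism $(\{f=0\},0)\xrightarrow{\;\sim\;}(\CC^{d-1},0)$. Since $X\subset\{f=0\}$ as germs at $0$, composing $\iota$ with this biholomorphism yields an embedding $(X,0)\hookrightarrow(\CC^{d-1},0)$. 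Iterating this as long as the current defining ideal is not contained in the square of the ambient maximal ideal, the ambient dimension strictly decreases while staying $\geq e$ by Step 1; the process therefore terminates, necessarily at an embedding whose ideal lies in the square of the maximal ideal, which by Step 1 has ambient dimension exactly $e$. Hence the embedding dimension of $(X,0)$ is $e$.

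\emph{Conclusion.} For the given embedding $\iota:(X,0)\to(\CC^d,0)$, Step 1 says $d=e$ if and only if $\Ideal_\iota\subset\mathfrak{m}^2$, while Step 2 identifies $e$ with the embedding dimension; combining the two proves the proposition. The only ingredient beyond elementary linear algebra over $\CC$ is the holomorphic implicit function theorem in Step 2, which I expect to be the main (though mild) technical point; one should also make sure that the quantity $e$ in Step 1 is genuinely independent of the embedding, which is immediate since it is computed purely inside $\mathcal{O}_{X,0}$.
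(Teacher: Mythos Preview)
Your proof is correct. Both you and the paper use the holomorphic implicit function theorem in the same way to show that if $\Ideal_\iota\not\subset\mathfrak{m}^2$ then $(X,0)$ embeds into $(\CC^{d-1},0)$; this is the paper's ``$\Longrightarrow$'' direction and the engine of your Step~2.

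The difference lies in the other direction. The paper argues directly: given $\Ideal_\iota\subset\mathfrak{m}^2$ and a second embedding $\tilde\iota:(X,0)\to(\CC^n,0)$, it extends $\tilde\iota\circ\iota^{-1}$ and $\iota\circ\tilde\iota^{-1}$ to holomorphic maps $\phi:(\CC^d,0)\to(\CC^n,0)$ and $\psi:(\CC^n,0)\to(\CC^d,0)$, observes that $\psi\circ\phi$ agrees with the identity on $\iota(X)$ and hence differs from it by elements of $\Ideal_\iota\subset\mathfrak{m}^2$, so $D_0(\psi\circ\phi)=I_d$ and $\psi$ is a submersion, forcing $n\ge d$. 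You instead pass through the intrinsic invariant $e=\dim_{\CC}\mathfrak{m}_{X,0}/\mathfrak{m}_{X,0}^2$: Step~1 gives $d\ge e$ for every embedding with equality precisely when $\Ideal_\iota\subset\mathfrak{m}^2$, and Step~2 iterates the implicit-function step to realize an embedding in dimension $e$, identifying the embedding dimension with $e$. Your route is a bit more conceptual and immediately exhibits the embedding dimension as the Zariski cotangent dimension; the paper's route is more hands-on and avoids naming $e$, trading that for the extension lemma for germs of maps on analytic subspaces.
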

	
	This result is already known (see \cite{Gunning90}[Chapter 1, Corollary 16]) but we give a proof here for the sake of completeness.
	
	\begin{proof}
		\underline{$\Longrightarrow$:}
		
		We prove it by contraposition: let us suppose that $\Ideal_\iota \nsubseteq \mathfrak{m}_{\CC^d, 0}^2$. Then there exists a holomorphic function $g : (\mathbb{C}^d, 0) \to (\mathbb{C}, 0)$ vanishing on $X$ and with non trivial linear part. We can reorder the coordinates so that $\frac{\partial g}{\partial x_d} (0) \neq 0$. Then by the implicit function theorem, there exists a neighbourhood $V$ of the origin and a holomorphic function $\phi : \mathbb{C}^{d-1} \to \mathbb{C}$ such that for all $x \in V$, $g (x) = 0$ if and only if $x_d = \phi (x_1, \ldots, x_{d-1})$.
		
		Let $\Phi : \mathbb{C}^{d-1} \to \mathbb{C}^d$ be defined by $\Phi (y) = (y, \phi (y))$ and $\Pi : \mathbb{C}^d \to \mathbb{C}^{d-1}$ be the restriction on the first $d-1$ coordinates. Then $\Pi$ induces a biholomorphism between $X$ and $\Pi (X)$, and its inverse is $\Phi_{|\Pi (X)}$. We thus have an embedding of $X$ in $\mathbb{C}^{d-1}$, which implies that $d$ is not minimal among the natural numbers $n$ such that there exists an embedding of $(X, 0)$ in $(\mathbb{C}^n, 0)$.
		
		\underline{$\Longleftarrow$:}
		
		Let $\tilde{\iota} : (X, 0) \to (\mathbb{C}^n, 0)$ be another embedding of $(X, 0)$. We want to prove that $n \geq d$.
		
		The map $(\tilde{\iota} \circ \iota^{-1})_{|\iota (X)} : (\iota (X), 0) \to (\mathbb{C}^n, 0)$ is holomorphic, so there exists a holomorphic function $\phi : (\mathbb{C}^d, 0) \to (\mathbb{C}^n, 0)$ such that $\phi_{|\iota (X)} = (\tilde{\iota} \circ \iota^{-1})_{|\iota (X)}$.
		
		There exists also a holomorphic function $\psi : (\mathbb{C}^n, 0) \to (\mathbb{C}^d, 0)$ such that $\psi_{|\tilde{\iota} (X)} = (\iota \circ \tilde{\iota}^{-1})_{|\tilde{\iota} (X)}$, and they satisfy:
		$$
		\psi \circ \phi_{|\iota (X)} = id_{|\iota (X)} \text{ and } \phi \circ \psi_{\tilde{\iota} (X)} = id_{\tilde{\iota} (X)}.
		$$
		
		Let $\Pi_i : \mathbb{C}^d \to \mathbb{C}$ be the projection on the $i$-th coordinate. Then $\Pi_i \circ \psi \circ \phi_{|\iota (X)} = x_i$ so there exists $f_i \in \Ideal_{\iota} \subset \mathfrak{m}_{\CC^d, 0}^2$ such that $\Pi_i \circ \psi \circ \phi = x_i + f_i$. It implies that $D_0 (\psi \circ \phi) = I_d$ since all $f_i$ are in $\mathfrak{m}_{\CC^d, 0}^2$, so $\psi \circ \phi$ is locally invertible. But then $\psi$ has to be locally surjective, which gives $n \geq d$.
	\end{proof}

	\subsection{Quasi-homogeneous singularities}\label{ssect:Qhs}
	
	We now introduce the second object that appears in the main theorem, and we show its link with contracting automorphisms.
	
	\begin{definition}[Weighted homogeneous polynomial]\label{def:whp}
		Let $(n_1, \ldots, n_d)$ be non-zero integers. A polynomial $P \in \mathbb{C}[X_1, \ldots, X_d]$ is a \emph{weighted homogeneous polynomial} for the weights $(n_1, \ldots, n_d)$ if there exists $n \in \mathbb{N}$ such that:
		$$
		P (t^{n_1} x_1, \ldots, t^{n_d} x_d) = t^n P (x_1, \ldots, x_d)
		$$
		for all $t \in \mathbb{C}$ and $(x_1, \ldots, x_d) \in \mathbb{C}^d$.
	\end{definition}
	
	\begin{definition}[Quasi-homogeneous singularity]\label{def:qhs}
		A singularity $(X, 0)$ is said to be \emph{quasi-homogeneous} if there exists an embedding $\iota : (X, 0) \hookrightarrow (\CC^d, 0)$ and weights $(n_1, \ldots, n_d)$ such that $\Ideal_{\iota}$ is generated by weighted homogeneous polynomials of weights $(n_1, \ldots, n_d)$. 
	\end{definition}
	
	Such a singularity carries contracting automorphisms: for any $t$ of modulus smaller than $1$, the map $f : (x_1, \ldots, x_d) \mapsto (t^{n_1} x_1, \ldots, t^{n_d} x_d)$ is contracting and the singularity is invariant under its action.

	\section{Extension of automorphisms}\label{sect:Ext}
	
	In this section, we prove that any singularity admits an embedding such that contracting automorphisms on the singularity extend as contracting automorphisms of the ambient space.

	\begin{definition}[Extension of function]\label{def:ext}
		Given an embedding $\iota :(X,0) \to (\mathbb{C}^d, 0)$ and $f \in \mathcal{O}_{X,0}$, an \emph{extension} of $f$ to $(\mathbb{C}^d, 0)$ is a function $F \in \mathcal{O}_{\mathbb{C}^d, 0}$ such that $f = F \circ \iota$.
		
		Let $f : (X, 0) \to (X,0)$ be an automorphism. An extension of $f$ to $(\mathbb{C}^d, 0)$ is a map $F : (\mathbb{C}^d, 0) \to (\mathbb{C}^d, 0)$ such that $F \circ \iota = \iota \circ f$.
	\end{definition}
	
	\begin{definition}[Good embedding]\label{def:goodemb}
		Let $(X, 0)$ be a complex singularity, and $\iota : (X, 0) \to (\mathbb{C}^{d}, 0)$ an embedding of $(X, 0)$ into $(\mathbb{C}^d, 0)$. We say that $\iota$ is a \emph{good embedding} if for any automorphism $f$ of $(X,0)$, all of its extensions are automorphisms of $(\mathbb{C}^d, 0)$.  We say that $\iota$ is a \emph{good contracting embedding} if it is a good embedding and for any contracting automorphism $f$ of $(X,0)$, all of its extensions are contracting automorphisms of $(\mathbb{C}^d, 0)$.
	\end{definition}
	
	\begin{thm}\label{thm:GoodEmb}
		Let $(X, 0)$ be a complex singularity, and $\iota : (X, 0) \to (\mathbb{C}^{d}, 0)$ an embedding of $(X, 0)$ in $(\mathbb{C}^d, 0)$. Then the following are equivalent:
		\begin{enumerate}
			\item $d$ is the embedding dimension of $(X, 0)$,
			\item $\iota$ is a good embedding,
			\item $\iota$ is a good contracting embedding.
		\end{enumerate}
	\end{thm}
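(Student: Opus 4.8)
The plan is to prove the cycle $(1)\Rightarrow(3)\Rightarrow(2)\Rightarrow(1)$, where the implication $(3)\Rightarrow(2)$ is immediate since a good contracting embedding is by Definition \ref{def:goodemb} a good embedding. The other two implications rest on Proposition \ref{prop:m2}, which lets me replace ``$d$ is the embedding dimension'' by ``$\Ideal_\iota\subseteq\mathfrak m^2$'', where $\mathfrak m=\mathfrak m_{\CC^d,0}$.

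For $(2)\Rightarrow(1)$, I argue by contraposition. If $d$ is not the embedding dimension, Proposition \ref{prop:m2} gives $g\in\Ideal_\iota$ with nonzero linear part $\ell$. Pick $v\in\CC^d$ with $\ell(v)=-1$ and set $F(x)=x+g(x)\,v$. Since $g\circ\iota=0$ we get $F\circ\iota=\iota$, so $F$ is an extension of $\mathrm{id}_{(X,0)}$; but $D_0F=I_d+v\,\ell$ has determinant $1+\ell(v)=0$, so $F$ is not an automorphism of $(\CC^d,0)$, i.e.\ $\iota$ is not a good embedding.

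For $(1)\Rightarrow(3)$, assume $\Ideal_\iota\subseteq\mathfrak m^2$, let $f$ be an automorphism of $(X,0)$ and $F$ any extension. First, $F$ is an automorphism: letting $G$ be an extension of $f^{-1}$, the composite $G\circ F$ is an extension of $\mathrm{id}_{(X,0)}$, so each of its components differs from the corresponding coordinate by an element of $\Ideal_\iota\subseteq\mathfrak m^2$; hence $D_0(G\circ F)=I_d$ and both $D_0F$, $D_0G$ are invertible. Now assume in addition that $f$ is contracting, and suppose for contradiction that $D_0F$ has an eigenvalue of modulus $\geq 1$. Let $W^s$ be the holomorphic local stable manifold of $F$ at $0$, tangent to the sum $E^-$ of generalized eigenspaces of eigenvalues of modulus $<1$; it is a smooth germ of dimension $k:=\dim E^-\leq d-1$. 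I claim $\iota(X)\subseteq W^s$. If not, the curve selection lemma yields a nonconstant analytic arc $\gamma:(\CC,0)\to(X,0)$ with $\tilde\gamma:=\iota\circ\gamma\not\subseteq W^s$. Straightening $W^s$ to $\{w=0\}$ in holomorphic coordinates $(z,w)\in\CC^k\times\CC^{d-k}$, the $w$-component of $F$ reads $Bw+Q(z,w)$ with $B$ invertible having all eigenvalues of modulus $\geq 1$ and $Q\in\mathfrak m^2$ divisible by the $w$-coordinates (because $\{w=0\}$ is invariant). Writing $\tilde\gamma(t)=(z(t),w(t))$ with $w(t)=w_0t^p+O(t^{p+1})$, $w_0\neq 0$, $p\geq 1$, an induction shows that the $w$-component $W_n(t)$ of $F^n(\tilde\gamma(t))$ satisfies $W_n(t)=B^nw_0\,t^p+O(t^{p+1})$, since the correction $Q$ contributes in $t$-order $>p$. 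On the other hand, since $f$ is uniformly contracting (Proposition \ref{prop:CUC}), one can fix $r>0$ so that $F^n\circ\tilde\gamma$ is holomorphic on $\{|t|<r\}$ for all $n$ and $M_n:=\sup_{|t|\leq r}\|F^n(\tilde\gamma(t))\|\to 0$; a Cauchy estimate on $W_n$ then gives $\|B^nw_0\|\leq M_n/r^p\to 0$, contradicting the elementary fact that $\inf_n\|B^nw_0\|>0$ whenever $B$ has all eigenvalues of modulus $\geq 1$ and $w_0\neq 0$ (expand along a Jordan chain and note the $N^0$-coefficient equals $1$ for every $n$). Hence $\iota(X)\subseteq W^s$; composing $\iota$ with a biholomorphism $W^s\cong(\CC^k,0)$ then embeds $(X,0)$ into $\CC^k$ with $k<d$, contradicting (1). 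Therefore all eigenvalues of $D_0F$ have modulus $<1$, so $F$ is contracting by the contraction principle, and $\iota$ is a good contracting embedding.

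The main obstacles I expect are, first, the behaviour near eigenvalues of modulus exactly $1$ (Jordan blocks, parabolic dynamics): this is why the argument must follow the whole transverse component $W_n(t)$ rather than a single eigen-functional (the latter fails already for a cusp, whose tangent cone lies in a hyperplane), and why one needs the lower bound $\inf_n\|B^nw_0\|>0$; and second, the $t$-order bookkeeping, where making $\{w=0\}=W^s$ invariant is exactly what forces $Q$ to be divisible by $w$ and hence to contribute in order $>p$. I would also need the holomorphy of the strong stable manifold $W^s$ (holomorphic stable manifold theorem) and the curve selection lemma, the latter implicitly using that $(X,0)$ is reduced; the zero-dimensional case is trivial.
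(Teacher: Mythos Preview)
Your proof is correct and follows essentially the same strategy as the paper: Proposition~\ref{prop:m2} for (1)$\Leftrightarrow$(2), and for the contracting part the stable manifold, an arc in $X$ not contained in it, tracking the leading transverse term under iteration, and a Cauchy estimate to force that term to $0$. The only packaging difference is that you follow the whole transverse vector and use the linear-algebra fact $\inf_n\|B^n w_0\|>0$, whereas the paper first puts $D_0F$ in Jordan normal form and isolates a single coordinate whose leading coefficient is exactly $c_l\lambda_l^{m}$; both give the same contradiction, and your version avoids the three-case bookkeeping of the paper's lemma.
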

	
	The equivalence between the first two properties was already known (see \cite{Gunning90}[Chapter 1, Theorem 10]) but we give another proof in the next subsection for the sake of completeness. We will prove then in subsection \ref{ssect:GoodCE} that being a good embedding and being a good contracting embedding are equivalent.
	
	\subsection{Good embedding}\label{ssect:GoodE}
	
	In this section, we prove the equivalence of the first two properties.

	\underline{$1 \implies 2$}
	
	Let $\iota : (X, 0) \to (\CC^d, 0)$ be an embedding such that $d$ is the embedding dimension. Then we also have $\Ideal_\iota \subset \mathfrak{m}^2$ (cf. Proposition \ref{prop:m2}).
	
	Let $f : (X, 0) \to (X, 0)$ be an automorphism and $g : (X, 0) \to (X, 0)$ be its inverse. We know that $f \circ g = g \circ f = id_X$.
	
	Let $F$ and $G$ be two extensions of $f$ and $g$ to $(\mathbb{C}^d, 0)$.
	
	Let $\Pi_i : \mathbb{C}^d \to \mathbb{C}$ be the projection on the $i$-th coordinate. Then the function $\Pi_i \circ G \circ F$ is holomorphic and is equal to $x_i$ when restricted to $X$. Then there exists $\phi_i \in \Ideal_\iota \subset \mathfrak{m}_{\CC^d, 0}^2$ such that $\Pi_i \circ G \circ F = x_i + \phi_i$. This shows that $G \circ F$ is equal to the identity up to applications in $\mathfrak{m}^2$ in each coordinate. It is thus a local biholomorphism as its linear part is invertible, so there exists $H_1 : (\mathbb{C}^d, 0) \to (\mathbb{C}^d, 0)$ such that $H_1 \circ G \circ F = id$. This shows that $F$ is left invertible.
	
	With a similar argument, we can find $H_2 : (\mathbb{C}^d, 0) \to (\mathbb{C}^d, 0)$ such that $F \circ G \circ H_2 = id$, so $F$ is also right invertible and is thus a biholomorphism of $(\mathbb{C}^d, 0)$.
	
	\underline{$2 \implies 1$}
	
	We prove it by contraposition. Let us assume that $d$ is not the embedding dimension, that is $\Ideal_{\iota} \not\subset \mathfrak{m}_{\CC^d, 0}^2$ (Proposition \ref{prop:m2}). Then there exists a function $g \in \mathcal{O}_{\CC^d, 0}$ vanishing on $(X, 0)$ and with non trivial linear part $G \in M_{1,d} (\CC)$.
	
	Let $f$ be an automorphism of $(X, 0)$ and $F$ be an extension to $(\CC^d, 0)$. Then for any constants $l_1, \ldots, l_d$, the map:
	$$
	(\Pi_1 \circ F + l_1 g, \ldots, \Pi_d \circ F + l_d g)
	$$
	is also an extension of $f$ to $(\CC^d, 0)$. Moreover, its linear part is $D_0 F + L G$ where $L = \begin{pmatrix}
		l_1\\
		\ldots\\
		l_d
	\end{pmatrix}$. Then the determinant of the linear part is:
	
	\begin{align*}
		\det (D_0 F + LG) &= \det(D_0 F_1 + l_1 G, \ldots, D_0 F_d + l_d G)\\
		&= \det (D_0 F) + \sum_{1 \leq i \leq d} l_i \det (D_0 F_1, \ldots, D_0 F_{i-1}, G, D_0 F_{i+1}, \ldots, D_0 F_d).
	\end{align*}
	
	Notice that $D_0 F$ is invertible, \textit{i.e.} $(D_0 F_j)_j$ is a basis. As $G \neq 0$, there exists $i$ such that $\det (D_0 F_1, \ldots, D_0 F_{i-1}, G, D_0 F_{i+1}, \ldots, D_0 F_d) \neq 0$. We deduce the existence of $L$ such that $\det (D_0 F + L G) = 0$, which is a contradiction.

	\subsection{Good contracting embedding}\label{ssect:GoodCE}

	If $\iota_1$ and $\iota_2$ are two good embeddings of $(X, 0)$, then for any automorphism $f$ of $(X, 0)$, the linear parts $D_0 F_1$ and $D_0 F_2$ of two extensions of $f$ are actually conjugated.
	Since being contracting on $(\CC^d, 0)$ only depends on the eigenvalues of $D_0 F$, we only need to prove that one good embedding is a good contracting embedding. We can thus change coordinates in $(\mathbb{C}^d, 0)$, and the good contracting embedding condition will still be satisfied.
	
	We can now finish the proof of the theorem.
	
	\begin{proof}[Proof of theorem \ref{thm:GoodEmb}]
		Let $\iota : (X, 0) \to (\mathbb{C}^d, 0)$ be a good embedding of $(X, 0)$ and let $f : (X, 0) \to (X, 0)$ be a contracting automorphism of $(X, 0)$. Let $F$ be an extension of $f$ to $(\mathbb{C}^d,0)$.
		
		Let $(\lambda_1, \ldots, \lambda_d)$ be the eigenvalues of $D_0 F$. Then $F$ is contracting on $(\mathbb{C}^d, 0)$ if and only if for all $1 \leq i \leq d$, we have $|\lambda_i| < 1$.
		
		Let us assume that there exists $k < d$ such that:
		\begin{itemize}
			\item $|\lambda_i|<1$ for all $i \leq k$,
			\item $|\lambda_i| \geq 1$ for all $i > k$.
		\end{itemize}
		
		By the stable manifold theorem \cite{Abate2001}, we know that there exists a holomorphic stable manifold invariant under the action of $F$ and tangent to the subspace generated by the eigenvectors of $D_0 F$ for which the eigenvalue is of modulus smaller than $1$.
		
		We can thus make a holomorphic change of coordinates so that the stable manifold is given by $\{x_{k+1} = \ldots = x_d = 0\}$. In such coordinates, we have: 
		\begin{equation}\label{eq:Fi}
			F_i (x_1, \ldots, x_k, 0, \ldots, 0) = 0 \quad \forall i>k, (x_1, \ldots, x_k) \in \mathbb{C}^k.
		\end{equation}
		
		We can moreover make a linear change of coordinates so that $D_0 F$ is in lower Jordan normal form (it is lower triangular) and \eqref{eq:Fi} is still satisfied.
		
		As $\iota$ is a good embedding, we know that we cannot embed $(X, 0)$ in the stable variety because it would contradict the minimality of $d$ as the smallest natural number such that we can embed $(X,0)$ in $(\mathbb{C}^n, 0)$. So there exists a point $p = (p_1, \ldots, p_d) \in \iota (X, 0)$ such that there exists $i > k$ for which $p_i \neq 0$.
		
		Then we can find a holomorphic curve $\phi : (\mathbb{C}, 0) \to (\mathbb{C}^d, 0)$ such that $\phi (0) = 0$ and $p \in Im (\phi)$. For all $i$, we can write:
		\begin{equation}\label{eq:phi}
			\phi_i (t) = c_i t^{n_i} + o (t^{n_i})
		\end{equation}
		with $c_i \neq 0$.
		
		Let $n = \min \{n_i, i>k\}$ and $l = \min \{i > k, n_i = n\}$.
		
		Then let us prove the following lemma:
		
		\begin{lem}
			Let $m \in \mathbb{N}$ be an integer.
			\begin{itemize}
				\item For all $k < i < l$, we have: $(F^m \circ \phi)_i (t) = o (t^n).$
				\item For $i = l$, we have: $(F^m \circ \phi)_l (t) = c_l \lambda_l^{m} t^{n} + o (t^n).$
				\item For all $l < i <d$, we have: $(F^m \circ \phi)_i (t) = O (t^n).$
			\end{itemize}
		\end{lem}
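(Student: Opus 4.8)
The plan is to prove the three estimates simultaneously by induction on $m$, using $F^{m+1}\circ\phi = F\circ(F^m\circ\phi)$ together with the special form that \eqref{eq:Fi} and the Jordan normalisation impose on the components $F_i$ for $i>k$. First I record that form. Since $F_i$ vanishes on the stable manifold $\{x_{k+1}=\dots=x_d=0\}$ by \eqref{eq:Fi}, we may write $F_i = \sum_{j=k+1}^{d} x_j\, G_{ij}$ with $G_{ij}$ holomorphic; in particular every monomial occurring in $F_i$ is divisible by some $x_j$ with $j>k$. Comparing linear parts with the lower Jordan form of $D_0 F$ yields $G_{ii}(0)=\lambda_i$, $G_{i,i-1}(0)=\varepsilon_i$ with $\varepsilon_i\in\{0,1\}$ and $\varepsilon_i\neq 0$ only if $i-1>k$, and $G_{ij}(0)=0$ otherwise. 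Thus $F_i(x)=\lambda_i x_i+\varepsilon_i x_{i-1}+(\text{terms of degree}\ge 2\text{, each divisible by some }x_j,\ j>k)$.

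For $m=0$ the three estimates are exactly \eqref{eq:phi} read off with the definitions of $n$ and $l$: when $k<i<l$ one has $n_i>n$, so $\phi_i=o(t^n)$; for $i=l$ one has $n_l=n$, so $\phi_l=c_l t^n+o(t^n)=c_l\lambda_l^{0}t^n+o(t^n)$; and for $l<i\le d$ one has $n_i\ge n$ (or $\phi_i\equiv 0$), so $\phi_i=O(t^n)$. For the inductive step put $z=F^m\circ\phi$; since $z(0)=0$ we have $z(t)\to 0$ as $t\to 0$, hence $G_{ij}(z(t))=G_{ij}(0)+o(1)$, and the inductive hypothesis gives $z_j=O(t^n)$ for every $j>k$. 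Therefore, for $i>k$,
\[
(F^{m+1}\circ\phi)_i = \sum_{j=k+1}^{d} z_j\, G_{ij}(z) = \lambda_i z_i + \varepsilon_i z_{i-1} + o(t^n),
\]
because $\sum_{j>k} z_j\, o(1)=o(t^n)$.

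Now I distinguish the three ranges. If $k<i<l$, then $z_i=o(t^n)$ and, when $\varepsilon_i\ne 0$, also $k<i-1<l$ so $z_{i-1}=o(t^n)$; hence $(F^{m+1}\circ\phi)_i=o(t^n)$. If $i=l$, then $\lambda_l z_l=c_l\lambda_l^{m+1}t^n+o(t^n)$ and, when $\varepsilon_l\ne 0$, $k<l-1<l$ so $z_{l-1}=o(t^n)$; hence $(F^{m+1}\circ\phi)_l=c_l\lambda_l^{m+1}t^n+o(t^n)$. If $l<i\le d$, then $z_i=O(t^n)$ and, when $\varepsilon_i\ne 0$, $i-1\ge l$ so $z_{i-1}=O(t^n)$; hence $(F^{m+1}\circ\phi)_i=O(t^n)$. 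This closes the induction.

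The step I expect to require the most care is controlling the nonlinear part of $F_i(z)$: an arbitrary monomial of degree $\ge 2$ in the $z_j$'s need not be $o(t^n)$, since for $j\le k$ we only know $z_j=o(1)$. This is precisely where \eqref{eq:Fi} enters: every such monomial carries a factor $z_j$ with $j>k$, which is $O(t^n)$ by induction, together with at least one further factor, which is $o(1)$; the product is $o(t^n)$, and summing (after factoring $F_i=\sum_{j>k}x_j G_{ij}$, which makes the estimate uniform) leaves only the linear contribution $\lambda_i z_i+\varepsilon_i z_{i-1}$ modulo $o(t^n)$. The remaining bookkeeping — the Jordan subdiagonal term $\varepsilon_i z_{i-1}$ crossing the thresholds $i-1=k$ and $i-1=l$ — is harmless thanks to the implication $\varepsilon_i\ne 0\Rightarrow i-1>k$.
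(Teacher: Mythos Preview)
Your proof is correct and follows essentially the same route as the paper: induction on $m$, splitting $F_i$ into its linear part $\lambda_i x_i+\varepsilon_i x_{i-1}$ and a remainder that, thanks to \eqref{eq:Fi}, lies in the ideal $(x_{k+1},\dots,x_d)$ and hence becomes $o(t^n)$ after substituting $z=F^m\circ\phi$. Your explicit factorisation $F_i=\sum_{j>k} x_j G_{ij}$ is a slightly tidier way of packaging the same idea (it makes the ``infinite sum of $o(t^n)$'' issue in the higher-order terms disappear by reducing to a finite sum over $j$), and your derivation of $\varepsilon_{k+1}=0$ from this factorisation is equivalent to the paper's eigenvalue argument.
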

		
		\begin{proof}
			We prove it by induction on $m$.
			
			For $m = 0$, these properties are true by \eqref{eq:phi} and the definitions of $n$ and $l$.
			
			Let us suppose that they are true for the rank $m$.
			
			Then for all $i$, we have 
			
			\begin{align*}
				(F^{m+1} \circ \phi)_i (t) &= F_i ((F^m \circ \phi)_1 (t), \ldots, (F^m \circ \phi)_d (t))\\
				&= \lambda_i (F^m \circ \phi)_i (t) + \epsilon_i (F^m \circ \phi)_{i-1} (t) + \text{ h.o.t.}
			\end{align*}
			where h.o.t consists of terms of higher order in $t$.
			
			We first remark that for all $i > k$, equation \eqref{eq:Fi} implies that $F_i$ does not contain any monomial of the form $x_1^{a_1} \ldots x_k^{a_k}$. Then every monomial in h.o.t. is of the form $x_j P$ with $j > k$ and $P (0) = 0$. By the induction hypothesis, we know that $x_j = O (t^n)$, so the higher order terms are actually a $o (t^n)$.
			
			Finally, we can see that $\epsilon_{k+1} = 0$ as $|\lambda_k| < 1 \leq |\lambda_{k+1}|$.
			
			We can thus conclude:
			
			\begin{itemize}
				\item For all $k < i < l$, $(F^{m+1} \circ \phi)_i (t) = \lambda_i o (t^n) + \epsilon_i o(t^n) + o (t^n) = o (t^n).$
				\item For $i = l$, we have: $(F^{m+1} \circ \phi)_l (t) = \lambda_l c_l \lambda_l^{m} t^{n} + \epsilon_l o (t^n) + o (t^n) = c_l \lambda_l^{m+1} t^{n} + o (t^n) .$
				\item For all $l < i <d$, $(F^{m+1} \circ \phi)_i (t) = \lambda_i O (t^n) + \epsilon_i O(t^n) + o (t^n) = O (t^n).$
			\end{itemize}
		\end{proof}
		
		Let $C = \phi (\mathbb{S}_1)$. As it is compact and $F$ is contracting on $X$, we know that for all $\epsilon > 0$ there exists $m \in \mathbb{N}$ such that $F^m (C) \subset B (0, \epsilon)$. In particular, it implies:
		
		\begin{align*}
			\lvert c_l \lambda_l^{m} \rvert &= \left| \frac{1}{2i\pi} \int_{\mathbb{S}_1} \frac{(F^m \circ \phi)_l (t)}{t^{n+1}} dt \right|
			\leq \frac{1}{2i\pi} \int_{\mathbb{S}_1} \left| \frac{(F^m \circ \phi)_l (t)}{t^{n+1}} \right| dt
			\leq \frac{\epsilon}{2\pi}.
		\end{align*}
		
		But then $|\lambda_l| < 1$, which is a contradiction.
	\end{proof}

	\section{Invariant ideals}\label{sect:InvId}
	
	The goal of this part is to prove the following theorem:
	
	\begin{thm}\label{thm:InvId}
		Let $F$ be a contracting diffeomorphism of $(\mathbb{C}^d, 0)$ and $\Ideal$ be an ideal of holomorphic functions which is invariant under the action of $F$. Then there exists holomorphic coordinates of $(\mathbb{C}^d, 0)$ in which $\Ideal$ is generated by weighted homogeneous polynomials $P^{(1)}, \ldots, P^{(r)}$ such that, for all $i \in \{1, \ldots, r\}$, the ideal $\langle P^{(1)}, \ldots, P^{(i)} \rangle$ is invariant under the action of $F$.
	\end{thm}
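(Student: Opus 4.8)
The plan is to combine the Poincaré--Dulac normalization of $F$ with a Jordan--Chevalley argument carried out on finite jets. In outline: first put $F$ in normal form; then show that $F$-invariance of $\Ideal$ forces invariance under the semisimple part of $D_0F$, hence under a diagonal torus; then read off weighted homogeneous generators; finally build the flag by a dévissage along the graded pieces. For the first step, since $F$ is contracting the eigenvalues $\lambda_1,\dots,\lambda_d$ of $L:=D_0F$ lie in the open unit disc, so they are in the Poincaré domain and there are only finitely many resonances $\lambda^{\mathbf m}=\lambda_i$ with $|\mathbf m|\ge 2$. By the Poincaré--Dulac theorem, after a holomorphic change of coordinates of $(\CC^d,0)$ — which replaces $\Ideal$ by another $F$-invariant ideal, so we rename it and forget the change — we may assume $F$ is a polynomial automorphism commuting with the semisimple part $S=\operatorname{diag}(\lambda_1,\dots,\lambda_d)$ of $L$; concretely each $F_i$ is the sum of its linear term, a possible Jordan term $\epsilon_i x_{i-1}$ (with $\epsilon_i=1$ only if $\lambda_{i-1}=\lambda_i$), and resonant monomials $x^{\mathbf m}$ with $\lambda^{\mathbf m}=\lambda_i$.

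\emph{From $F$-invariance to $S$-invariance.} Let $\Phi(g)=g\circ F$ on $\mathcal O_{\CC^d,0}$; it is a ring automorphism preserving the maximal ideal $\mathfrak m$ and, by hypothesis, preserving $\Ideal$. Set $R:=S^{-1}\circ F$, so $F=S\circ R=R\circ S$ and $R$ has unipotent linear part. Then $\Phi=\Phi_S\Phi_R=\Phi_R\Phi_S$ with the factors commuting, where $\Phi_S(g)=g\circ S$ is diagonal in the monomial basis, hence semisimple on each finite-dimensional quotient $\mathcal O_{\CC^d,0}/\mathfrak m^k$, while $\Phi_R(g)=g\circ R$ is unipotent there (block upper triangular for the degree filtration, with unipotent diagonal blocks $\operatorname{Sym}^j$ of the transpose of the linear part of $R$). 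This is precisely the multiplicative Jordan decomposition of $\Phi$ on $\mathcal O_{\CC^d,0}/\mathfrak m^k$; since the semisimple part of an invertible operator on a finite-dimensional space is a polynomial in it, it preserves every invariant subspace, in particular the image of $\Ideal$. Hence $\Phi_S(\Ideal)\subseteq\Ideal+\mathfrak m^k$ for all $k$, so $\Phi_S(\Ideal)\subseteq\Ideal$ by the Krull intersection theorem (and likewise for $S^{-1}$): thus $\Ideal$ is invariant under the diagonal linear map $S$. \textbf{This is the step I expect to be the main obstacle}: one must arrange the bookkeeping so that all the nonlinear resonant terms of $F$ (which do not occur in \cite{OrlikWagreich}) are absorbed into the unipotent factor $\Phi_R$, which is exactly what the Poincaré--Dulac normalization provides; the contraction hypothesis is used once more, crucially, in the third step below.

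\emph{Torus invariance and weights.} The stabilizer of $\Ideal$ in $GL_d(\CC)$ is Zariski closed — for each of the finitely many generators $h$ of $\Ideal$ the condition $h\circ A\in\Ideal$ is Zariski closed in $A$, because $\Ideal$ is $\mathfrak m$-adically closed — so it contains the Zariski closure $\mathbb T$ of $\{S^m:m\in\ZZ\}$, a diagonal algebraic subgroup of $(\CC^*)^d$ with character lattice $\ZZ^d/\Lambda$, where $\Lambda=\{v\in\ZZ^d:\lambda^v=1\}$. Consequently $\Ideal$ is graded by $\ZZ^d/\Lambda$: it is generated by (finitely many, by Noetherianity) polynomials all of whose monomials $x^\alpha$ share the same value $\lambda^\alpha$, and each graded piece $\mathcal O_\chi$ is finite dimensional, since $\alpha-\beta\in\Lambda$ forces $\langle\ell,\alpha\rangle=\langle\ell,\beta\rangle$ with $\ell=(-\log|\lambda_1|,\dots,-\log|\lambda_d|)\in\RR_{>0}^d$, which bounds exponents. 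Now $\Lambda\subseteq\ell^{\perp}$ and $\ell$ is strictly positive, so the rational subspace $(\operatorname{span}_{\RR}\Lambda)^{\perp}$ meets the open positive cone and hence contains a vector $n=(n_1,\dots,n_d)\in\ZZ_{>0}^d$ with $\langle n,v\rangle=0$ for all $v\in\Lambda$. A polynomial whose monomials all share the same $\lambda^\alpha$ then has all its monomials of the same weighted degree $\langle n,\alpha\rangle$, i.e.\ is weighted homogeneous for the weights $n$; thus $\Ideal$ is generated by weighted homogeneous polynomials for $(n_1,\dots,n_d)$.

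\emph{The flag.} Finally I would construct the generators inductively. Since $F$ commutes with $\mathbb T$, the operator $\Phi$ preserves every graded piece $\mathcal O_\chi$, hence $\Ideal_\chi$ and, for any graded $F$-invariant subideal $J$, each $(J)_\chi$. Suppose $\ZZ^d/\Lambda$-homogeneous $P^{(1)},\dots,P^{(i)}\in\Ideal$ have been found with $J_i:=\langle P^{(1)},\dots,P^{(i)}\rangle$ already $F$-invariant (begin with $J_0=0$). If $J_i\neq\Ideal$, the finitely generated graded module $\Ideal/J_i$ has a nonzero homogeneous component $\Ideal_\chi/(J_i)_\chi$, finite dimensional and $\Phi$-stable, hence containing an eigenvector; lift it to $P^{(i+1)}\in\Ideal_\chi$, so $\Phi(P^{(i+1)})\equiv\mu P^{(i+1)}\pmod{J_i}$ for some $\mu\in\CC^{*}$. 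Using $\Phi(J_i)=J_i$ one checks that $J_{i+1}:=J_i+(P^{(i+1)})$ is again $F$-invariant and strictly larger than $J_i$. By Noetherianity the process terminates at some $J_r=\Ideal$, and, as in the previous step, each $P^{(j)}$ is weighted homogeneous for the weights $(n_1,\dots,n_d)$, with every $\langle P^{(1)},\dots,P^{(i)}\rangle$ invariant under $F$, which is the statement.
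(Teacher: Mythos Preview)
Your proof is correct and takes a genuinely different route from the paper's. After the common first step (Poincar\'e--Dulac), the paper proceeds by a direct combinatorial argument: it introduces a total $\lambda$-order on $\mathbb{N}^d$, decomposes a minimal set of generators $\phi^{(i)}$ into $\lambda$-homogeneous pieces, puts the constant matrix $A_0$ of the relation $\phi^{(i)}\circ F=\sum_j A^i_j\phi^{(j)}$ in Jordan form, and \emph{defines} $P^{(i)}$ as the single $\lambda$-homogeneous component $\phi^{(i)}_{\gamma_i}$ with $\lambda^{\gamma_i}=A^i_{i,0}$; the equality $\mathcal{I}=\langle P^{(1)},\dots,P^{(r)}\rangle$ is then proved by a double induction on $(i,\gamma)$ along the $\lambda$-order (Propositions~\ref{prop:IJ} and~\ref{prop:JI}). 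You instead factor off the semisimple part via Jordan--Chevalley on finite jets, pass to the Zariski closure to get torus invariance, and then run a d\'evissage on graded pieces. Your argument is more conceptual and explains structurally why the resonant nonlinear terms are harmless: they live in the unipotent factor $\Phi_R$, and the semisimple part of an operator is a polynomial in it. It also makes explicit the existence of strictly positive \emph{integer} weights $n\in\mathbb{Z}_{>0}^d$ via the lattice argument on $(\operatorname{span}\Lambda)^\perp$---a step the paper leaves implicit, since its $P^{(i)}$ are a priori only $\lambda$-homogeneous in the sense of Definition~\ref{def:Hog}. Conversely, the paper's approach is more elementary (no algebraic groups, no Jordan--Chevalley) and yields the $P^{(i)}$ as explicit pieces of the original generators, with finer bookkeeping (e.g.\ Remark~\ref{rem:ordcoord}). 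One small point to tighten in your write-up: the passage from ``$\mathbb{T}$-invariant'' to ``$\mathcal{I}$ is graded by $\mathbb{Z}^d/\Lambda$'' deserves the same Krull/$\mathfrak{m}$-adic closedness argument you already used for $\Phi_S$-invariance, since $\mathcal{O}_{\mathbb{C}^d,0}$ is not a rational $\mathbb{T}$-module; and in the flag step you only need (and only have) $\Phi(J_i)\subseteq J_i$, not equality.
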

	
	We first choose holomorphic coordinates in $(\CC^{n}, 0)$ to get a normal form for $F$. Then we build an order and an equivalence relation on $\NN^d$, which enable us to decompose any map $(\CC^{n}, 0) \to (\CC, 0)$ into a sum of weighted homogeneous polynomials. Let $\Ideal$ be an ideal invariant under the action of $F$, we pick a minimal number of generators $\phi^{(1)}, \ldots, \phi^{(r)}$. After a linear change of the generators, we show that we can extract a weighted homogeneous polynomial $P^{(i)}$ (which is not the initial part) from each $\phi^{(i)}$ such that $\Ideal$ is generated by these polynomials. We prove this by double inclusion, first thanks to the definition of $P^{(i)}$ and then using the minimality of the number of generators.
	
	\subsection{Normal form for $F$}\label{ssect:NF}
	
	In this section, we change the coordinates of $(\mathbb{C}^d, 0)$ in order to get a more convenient form for $F$.
	
	\begin{definition}[Nicely ordered spectrum]\label{def:lambda}
		Let $\lambda$ be the spectrum of $D_0 F$. We say that it is \emph{nicely ordered} if the eigenvalues are ordered by their modulus : $1 > \abs{\lambda_i} \geq \abs{\lambda_j} > 0$ for $i \leq j$ (all the eigenvalues are of modulus smaller than $1$ as $F$ is contracting).
	\end{definition}
	
	After performing a linear change of coordinates, we can assume that $D_0 F$ is in lower Jordan normal form and that its diagonal $\lambda$ is nicely ordered, so we can write:
	$$
	D_0 F (x_i) = \lambda_i x_i + \epsilon_i x_{i-1}
	$$
	with $\epsilon_i \in \{0,1\}$ can be non zero only if $\lambda_i = \lambda_{i-1}$.

	\begin{definition}[Resonant monomial]\label{def:res}
		A monomial $x^{\Alp}$ is said to be \emph{$\lambda$-resonant} with respect to the $i$-th coordinate if it satisfies $|\Alp| \geq 2$ or $\Alp = \mathds{1}_{i-1} = (0, \ldots, 0, 1, 0, \ldots, 0)$ and:
		$$
		\lambda^{\Alp} = \lambda_i
		$$	
		where $\lambda^{\Alp} = \lambda_1^{\Alp_1} \ldots \lambda_d^{\Alp_d}$.
		
		We denote by $R_i = \{\Alp \in \NN^d, \lambda^{\Alp} = \lambda_i\}$ the set of exponents that are $\lambda$-resonant with respect to the $i$-th coordinate.
		
	\end{definition}
	
	\begin{definition}[Poincaré-Dulac normal form]\label{def:pdnf}
		A germ of diffeomorphism $F : (\mathbb{C}^d, 0) \to (\mathbb{C}^d,0)$ is in Poincaré-Dulac normal form if $D_0 F$ is in lower Jordan normal form, its diagonal is nicely ordered and $F - D_0 F$ admits only resonant monomials. We will call the diagonal $\lambda$ of $D_0 F$ the \emph{ordered spectrum} of $F$.
	\end{definition}
	
	In order to get a normal form for the automorphism $F$, we now recall the classic Poincaré-Dulac theorem (\cite{Dulac1912}, \cite{Berteloot06}[Chapter 4], \cite{RosayRudin}, \cite{Stern57}).
	
	\begin{thm}[Poincaré-Dulac]\label{thm:PD}
		Let $F$ be a contracting automorphism of $(\mathbb{C}^{d}, 0)$. Then $F$ is holomorphically conjugated to an automorphism in Poincaré-Dulac normal form.
	\end{thm}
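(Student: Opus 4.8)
The plan is to run the classical three-step proof of the Poincaré--Dulac theorem, the contracting hypothesis entering twice to simplify matters. \emph{First}, since $F$ is contracting every eigenvalue of $D_0F$ lies in the open unit disc, and after a linear change of coordinates I may assume that $L:=D_0F$ is in lower Jordan normal form with nicely ordered diagonal $\lambda=(\lambda_1,\dots,\lambda_d)$ (Definition~\ref{def:lambda}); thus $L(x)_i=\lambda_i x_i+\epsilon_i x_{i-1}$ with $\epsilon_i\in\{0,1\}$ and $\epsilon_i=1$ possible only when $\lambda_i=\lambda_{i-1}$. Write $F=L+\sum_{k\ge 2}F_k$ with $F_k$ the vector-valued homogeneous polynomial of degree $k$, and let $e_1,\dots,e_d$ be the standard basis of $\CC^d$.

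\emph{Second}, I would remove the non-resonant monomials degree by degree. Assume inductively that, after finitely many polynomial changes of coordinates tangent to the identity, every monomial of degree $<k$ occurring in $F$ is resonant (Definition~\ref{def:res}), and let $F_k$ be the current degree-$k$ term. Conjugating by $\Phi_k=\mathrm{id}+H_k$, with $H_k$ homogeneous of degree $k$, leaves all terms of degree $<k$ unchanged and replaces $F_k$ by $F_k+\mathcal{L}_k(H_k)$, where $\mathcal{L}_k(H):=L\circ H-H\circ L$ acts on the finite-dimensional space $V_k$ of degree-$k$ vector-valued homogeneous polynomials. In the basis of vector-valued monomials $x^{\Alp}e_i$ the diagonal entries of $\mathcal{L}_k$ are the numbers $\lambda_i-\lambda^{\Alp}$, while its off-diagonal entries, which come only from the Jordan terms $\epsilon_\bullet$ of $L$, connect $x^{\Alp}e_i$ to monomials $x^{\Bet}e_j$ with $\lambda_j-\lambda^{\Bet}=\lambda_i-\lambda^{\Alp}$ (because $\epsilon_l\neq 0$ forces $\lambda_l=\lambda_{l-1}$, so the moves it induces change neither $\lambda_i$ nor $\lambda^{\Alp}$). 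Hence $\mathcal{L}_k$ preserves the direct sum $V_k=N_k\oplus V_k'$, where $N_k$ is spanned by the resonant monomials ($\lambda^{\Alp}=\lambda_i$, which for $k\ge 2$ are exactly the $\lambda$-resonant ones) and $V_k'$ by the rest, and $\mathcal{L}_k$ restricts to an automorphism of $V_k'$ (on each block $\lambda_i-\lambda^{\Alp}=c\neq 0$ it is $c\cdot\mathrm{Id}$ plus a nilpotent part). Writing $F_k=F_k^{\mathrm{res}}+F_k^{\mathrm{nr}}$ accordingly and taking $H_k=-\mathcal{L}_k^{-1}(F_k^{\mathrm{nr}})\in V_k'$ turns the degree-$k$ term into $F_k^{\mathrm{res}}\in N_k$. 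Iterating over $k\ge 2$ yields a formal transformation $\Phi=\Phi_2\circ\Phi_3\circ\cdots$ (well defined, since $\Phi_j$ affects only degrees $\ge j$), tangent to the identity, with $\Phi^{-1}\circ F\circ\Phi$ in Poincaré--Dulac normal form as a formal power series.

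\emph{Third}, I must prove that $\Phi$ converges, and this is the only serious point. Here the contracting hypothesis is used again, in two ways. Since $\rho:=\max_j\abs{\lambda_j}<1$ one has $\abs{\lambda^{\Alp}}\le\rho^{\abs{\Alp}}$, so a resonance $\lambda^{\Alp}=\lambda_i$ with $\abs{\Alp}\ge 2$ forces $\abs{\Alp}\le\log(\min_j\abs{\lambda_j})/\log\rho$: there are only finitely many resonant monomials, so the non-linear part of the normal form is in fact a polynomial. Moreover $\lambda^{\Alp}\to 0$ as $\abs{\Alp}\to\infty$, hence $\abs{\lambda_i-\lambda^{\Alp}}\to\abs{\lambda_i}>0$ and the non-resonant ``divisors'' $\lambda_i-\lambda^{\Alp}$ stay bounded away from $0$; in particular there are no small divisors. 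With these two facts the convergence of $\Phi$ on a neighbourhood of $0$ follows from the classical majorant estimates for the Poincaré domain (\cite{Stern57}, \cite{Berteloot06}, \cite{RosayRudin}): in a suitably weighted norm the $H_k$ grow at most geometrically, the infinite composition $\Phi_2\circ\Phi_3\circ\cdots$ converges to a holomorphic germ, and — being tangent to the identity — it is a biholomorphism of $(\CC^d,0)$, so that $\Phi^{-1}\circ F\circ\Phi$ is a genuine holomorphic automorphism in Poincaré--Dulac normal form. I expect the technical heart to be precisely this majorant bookkeeping — in particular keeping $\mathcal{L}_k^{-1}$ under control as $k\to\infty$ when $L$ has Jordan blocks, where one must work with a weighted rather than the naive coefficient norm — but since neither the estimate nor its proof needs any modification in our setting, I would reproduce it from the references or simply cite it.
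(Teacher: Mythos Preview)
Your sketch is correct and follows the classical three-step argument (formal normalization via the homological operator, then convergence by majorant estimates in the Poincar\'e domain), which is precisely the content of the references \cite{Dulac1912}, \cite{Stern57}, \cite{Berteloot06}, \cite{RosayRudin}. The paper itself does not prove Theorem~\ref{thm:PD} at all: it is stated as a classical result and simply cited, so there is no ``paper's proof'' to compare against---your outline is essentially what those references contain, and your final remark that you would ultimately cite them for the majorant bookkeeping puts you in the same position as the paper.
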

	
	\begin{rem}\label{rem:PD}
		Our choice of a lower triangular Jordan normal form for $D_0 F$ and of the order on the eigenvalues by non-increasing modulus implies that any germ $F$ in Poincaré-Dulac normal form is triangular, i.e., of the form:
		$$
		F (x_1, \ldots, x_d) = (f_1 (x_1), f_2 (x_1, x_2), \ldots, f_d (x_1, \ldots, x_d)).
		$$
		In fact, any monomial $x^{\Alp}$ with $\Alp \in R_i$ satisfies $\Alp_k = 0$ for all $k \geq i$. 
		
		Hence, for all $i$, we can write: 
		\begin{equation}\label{eq:fi}
			f_{i} (x_1, \ldots, x_i) = \lambda_i x_i + \sum_{\Alp \in R_i} \nu_{i,\Alp} x^{\Alp},
		\end{equation}
		for some $\nu_{i,\Alp} \in \CC$.
	\end{rem}
	
	From now on, we will assume that $F$ is in Poincaré-Dulac normal form.

	\subsection{An order on $\mathbb{N}^d$}\label{ssect:order}

	We see in the previous section that the definition of resonant monomials relies on the coefficients $\lambda^{\Alp}$, where $\lambda$ is an ordered spectrum of $F$ and $\Alp \in\NN^d$. Similarly, it will make sense to use these numbers to order the elements of $\mathbb{N}^d$ in order to decompose any holomorphic function in a sum of weighted homogeneous polynomials. Some of these polynomials will be used later to generate the ideal $\Ideal$.
	
	\begin{definition}[$\lambda$-order]\label{def:ord}
		Let $F$ be a contracting automorphism and $\lambda$ be its ordered spectrum.
		We define the \emph{$\lambda$-order} over $\mathbb{N}^d$ as follows: for any $\Alp$, $\Bet \in \mathbb{N}^d$, we say that $\Alp \succ_{\lambda} \Bet$ if either $|\lambda^{-\Alp}| > |\lambda^{-\Bet}|$ or $|\lambda^{\Alp}| = |\lambda^{\Bet}|$ and $\Alp > \Bet$ in the lexicographic order (\textit{i.e.} $\Alp_1>\Bet_1$, or $\Alp_1 = \Bet_1$ and $\Alp_2 > \Bet_2$, etc).
		
		In other terms, the $\lambda$-order is the lexicographic order on $ (|\lambda^{-\Alp}|, \Alp_1, \ldots, \Alp_n)$.
	\end{definition}
	
	We will omit the index on $\succ_{\lambda}$ when the choice of the ordered spectrum is clear from context.
	
	The $\lambda$-order is a total order on $\mathbb{N}^d$, which is also compatible with the sum: for all $\Alp \succeq \Bet$ and $\Alp' \succeq \Bet'$, we have: $\Alp + \Alp' \succeq \Bet + \Bet'$.
	
	\begin{definition}[$\lambda$-weight]\label{def:eqr}
		Let $\lambda$ be the ordered spectrum of $F$. We say that two n-tuples $\Alp, \Alp' \in \NN^d$ have same \emph{$\lambda$-weight} if
		$$
		\lambda^{\Alp - \Alp'} = 1.
		$$
		
		This actually defines an equivalence relation $\sim$ on $\NN^d$ and we denote by $\Gamma = \NN^d / \sim$ the quotient set. We can order the equivalent classes of $\Gamma$ according to their smallest element: we say that $\gamma \prec \gamma'$ if $\min \{\Alp, [\Alp] = \gamma\} \prec \min \{\Alp, [\Alp] = \gamma'\}$, where the minimum is taken with respect to the $\lambda$-order.
	\end{definition}
	
	From now on, we will sometimes abuse notation and denote by $\lambda^\gamma$ the value $\lambda^{\Alp}$ for any representative $\Alp$ of the class $\gamma$.
	
	\begin{definition}[$\log_{\lambda}$]\label{def:log}
		The map $\gamma \mapsto \lambda^{\gamma}$ defines a bijection between $\Gamma$ and its image $\lambda^{\Gamma}$. We denote by $\log_{\lambda}$ its inverse. 
	\end{definition}
	
	\begin{rems}\label{rems:ec}\hfill
		
		\begin{enumerate}
			\item As all the eigenvalues in $\lambda$ are of modulus smaller than $1$, the equivalent classes in $\NN^d$ are all finite.
			\item The set $(\Gamma, \succeq)$ is order isomorphic to $(\mathbb{N}, \geq)$. We can thus do inductions on $\Gamma$.
			\item The sum on $\mathbb{N}^d$ induces a sum on $\Gamma$ which satisfies $\gamma + \gamma' \succeq \gamma$ for all $\gamma, \gamma'$.
		\end{enumerate}
	\end{rems}
	
	\begin{ex}
		The equivalence classes in $\NN^d$ actually depend on the choice of the order of the eigenvalues in the ordered spectrum $\lambda$: pick $\lambda = (-1/2, i/2, i/2)$ and $\mu = (i/2, i/2, -1/2)$, the $(0,1,0) \sim_\lambda (0,0,1)$ but they are not of the same $\mu$-weight.
	\end{ex}
	
	\subsection{$\lambda$-gradation on $\CC[[x]]$}
	
	\begin{definition}[$\lambda$-homogeneous polynomials]\label{def:Hog}
		Let $\gamma \in \Gamma$, we will note $\Ho_\gamma$ the vector space of polynomials generated by the monomials $x^{\Alp}$ such that $[\Alp] = \gamma$. It is finite dimensional as the equivalent classes in $\NN^d$ are finite.
		
		We say that a polynomial $P$ is \emph{$\lambda$-homogeneous} if $P \in \Ho_\gamma$ for some $\gamma$, called the $\lambda$-degree of $P$.
	\end{definition}
	
	These spaces define a gradation of the vector space $\CC[[x]]$ which behaves well with the structure of $\CC$-algebra on $\CC[[x]]$:
	
	\begin{lem}\label{lem:prod}
		For any $\gamma, \gamma' \in \Gamma$, we have $\Ho_\gamma \cdot \Ho_{\gamma'} \subseteq \Ho_{\gamma+\gamma'}$.
	\end{lem}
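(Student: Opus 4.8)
The plan is to reduce the statement to monomials and then invoke the fact that the sum on $\Gamma$ is induced from the sum on $\NN^d$. Since $\Ho_\gamma$ is by definition spanned by the monomials $x^{\Alp}$ with $[\Alp]=\gamma$, and multiplication in $\CC[[x]]$ is $\CC$-bilinear, it suffices to check that for any $\Alp,\Bet \in \NN^d$ with $[\Alp]=\gamma$ and $[\Bet]=\gamma'$ one has $x^{\Alp}\cdot x^{\Bet}=x^{\Alp+\Bet}\in\Ho_{\gamma+\gamma'}$, i.e.\ $[\Alp+\Bet]=\gamma+\gamma'$.

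For this last point I would argue directly from the definition of the equivalence relation $\sim$: if $\Alp\sim\Alp'$ and $\Bet\sim\Bet'$, then $\lambda^{(\Alp+\Bet)-(\Alp'+\Bet')}=\lambda^{\Alp-\Alp'}\lambda^{\Bet-\Bet'}=1\cdot 1=1$, so $\Alp+\Bet\sim\Alp'+\Bet'$. Hence the map $(\gamma,\gamma')\mapsto[\Alp+\Bet]$ (for any representatives) is well defined, and this is precisely the sum on $\Gamma$ referred to in Remark \ref{rems:ec}(3); in particular $[\Alp+\Bet]=[\Alp]+[\Bet]=\gamma+\gamma'$. Therefore every product of spanning monomials of $\Ho_\gamma$ and $\Ho_{\gamma'}$ lies in $\Ho_{\gamma+\gamma'}$, and extending bilinearly gives $\Ho_\gamma\cdot\Ho_{\gamma'}\subseteq\Ho_{\gamma+\gamma'}$.

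There is no real obstacle here: the only thing to be slightly careful about is that the sum on $\Gamma$ is genuinely well defined (the computation above), which one may prefer to have recorded when $\Gamma$ and its addition were introduced; everything else is a one-line bilinearity argument. If a self-contained proof is wanted, one simply inlines the well-definedness check as above rather than quoting Remark \ref{rems:ec}(3).
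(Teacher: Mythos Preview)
Your proof is correct and follows essentially the same approach as the paper: the paper's one-line proof simply records $\lambda^{\gamma}\lambda^{\gamma'}=\lambda^{\gamma+\gamma'}$, which is exactly the computation underlying your reduction to monomials and the well-definedness check of the induced sum on $\Gamma$. You have merely spelled out the bilinearity step and the verification from Remark~\ref{rems:ec}(3) more explicitly.
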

	
	\begin{proof}
		This simply comes from $\lambda^{\gamma} \lambda^{\gamma'} = \lambda^{\gamma + \gamma'}$.
	\end{proof}
	
	\begin{rem}\label{rem:ex}
		There is however not equality in the inclusion given in the lemma above: if $\lambda = (1/2, 1/4)$, then $\Ho_{[(1, 0)]}$ is generated by $x_1$ but $\Ho_{[(2, 0)]}$ is generated by $x_1^2$ and $x_2$.
	\end{rem}
	
	The monomials appearing once we develop $x^{\Bet} \circ F$ are in the equivalent class of $\Bet$ and are larger or equal than $\Bet$ (for the order $\succ_{\lambda}$). More precisely, we get:
	
	\begin{lem}\label{lem:Bet}
		Let $F$ be a contracting automorphism on $(\CC^d, 0)$ in Poincaré-Dulac normal form, and denote by $\lambda$ its ordered spectrum.
		For any $\Bet \in \mathbb{N}^d$, we have:
		$$
		x^{\Bet} \circ F = 
		\lambda^{\Bet} x^{\Bet} + \sum_{\substack{[\Alp] = [\Bet] \\ \Alp \succ \Bet }} \nu_{\Alp} x^{\Alp} 
		$$
		for some $\nu_{\Alp} \in \CC$ (which depend on $\Bet$).
	\end{lem}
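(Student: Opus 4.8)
The plan is to leverage the triangular shape of $F$ from Remark~\ref{rem:PD} and the multiplicativity of the $\lambda$-gradation (Lemma~\ref{lem:prod}). First I would record that each component $f_i=f_i(x_1,\dots,x_i)$ lies in $\Ho_{[\mathds{1}_i]}$: by \eqref{eq:fi}, every monomial of $f_i$ is $x_i$ or some $x^{\Alp}$ with $\lambda^{\Alp}=\lambda_i$, hence of $\lambda$-weight $[\mathds{1}_i]$. Writing $f_i=\lambda_i x_i+g_i$, I claim every monomial $x^{\Alp}$ of $g_i$ has $\Alp\succ\mathds{1}_i$. Indeed, by the triangularity statement in Remark~\ref{rem:PD} (or directly: $\abs{\lambda^{\Alp}}=\abs{\lambda_i}$ together with $\abs{\lambda_j}<1$ for all $j$ forces $\Alp_i=0$ once $\Alp\neq\mathds{1}_i$), such an $\Alp$ is a nonzero vector supported in $\{1,\dots,i-1\}$, so its first nonzero entry precedes that of $\mathds{1}_i$ and $\Alp>_{\mathrm{lex}}\mathds{1}_i$; since moreover $[\Alp]=[\mathds{1}_i]$, i.e.\ $\abs{\lambda^{-\Alp}}=\abs{\lambda^{-\mathds{1}_i}}$, we get $\Alp\succ\mathds{1}_i$. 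Thus $g_i\in\Ho_{[\mathds{1}_i]}$ is a sum of monomials with exponent strictly $\succ\mathds{1}_i$.

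Next, since $x^{\Bet}\circ F=\prod_{i=1}^d f_i^{\Bet_i}$ and each $f_i\in\Ho_{[\mathds{1}_i]}$, Lemma~\ref{lem:prod} gives $x^{\Bet}\circ F\in\Ho_{\sum_i\Bet_i[\mathds{1}_i]}=\Ho_{[\Bet]}$, so every monomial $x^{\Alp}$ occurring already satisfies $[\Alp]=[\Bet]$. Expanding $\prod_i(\lambda_i x_i+g_i)^{\Bet_i}$, the exponent of each resulting monomial is a sum, over $i$ and over the $\Bet_i$ factors of $(\lambda_i x_i+g_i)^{\Bet_i}$, of exponents each $\succeq\mathds{1}_i$; by compatibility of $\succeq$ with addition this sum is $\succeq\sum_i\Bet_i\mathds{1}_i=\Bet$. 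Hence every monomial of $x^{\Bet}\circ F$ other than $x^{\Bet}$ has exponent $\succ\Bet$, and it only remains to identify the coefficient of $x^{\Bet}$.

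Finally I would compute that coefficient by descending induction on $d$. The variable $x_d$ occurs in $x^{\Bet}\circ F=\prod_i f_i^{\Bet_i}$ only through $f_d^{\Bet_d}$, and inside $f_d$ only via the monomial $\lambda_d x_d$ (the monomials of $g_d$ being supported in $\{1,\dots,d-1\}$); so the $x_d$-degree equals the number of factors of $f_d^{\Bet_d}$ contributing $\lambda_d x_d$, and reaching $x_d$-degree $\Bet_d$ forces all of them to do so. Hence the coefficient of $x^{\Bet}$ equals $\lambda_d^{\Bet_d}$ times the coefficient of $x_1^{\Bet_1}\cdots x_{d-1}^{\Bet_{d-1}}$ in $\prod_{i=1}^{d-1}f_i(x_1,\dots,x_i)^{\Bet_i}$, which by induction is $\prod_{i=1}^{d-1}\lambda_i^{\Bet_i}$ (base case $d=1$: $f_1=\lambda_1 x_1$). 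Multiplying gives $\lambda^{\Bet}$, and combining with the previous paragraph yields $x^{\Bet}\circ F=\lambda^{\Bet}x^{\Bet}+\sum_{[\Alp]=[\Bet],\,\Alp\succ\Bet}\nu_{\Alp}x^{\Alp}$. The main obstacle is the first-paragraph claim that each $f_i$ is $\lambda$-homogeneous with $x_i$ as its unique $\prec$-minimal monomial; once that is in place, the rest is bookkeeping with Lemma~\ref{lem:prod} and the compatibility of $\succeq$ with sums.
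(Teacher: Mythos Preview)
Your argument is correct and follows essentially the same approach as the paper: show each $f_i\in\Ho_{[\mathds{1}_i]}$ with $\mathds{1}_i$ as the $\prec$-minimal exponent (via Remark~\ref{rem:PD}), then use Lemma~\ref{lem:prod} and the compatibility of $\succeq$ with addition on the product $\prod_i f_i^{\Bet_i}$. The one addition you make---the explicit computation of the leading coefficient $\lambda^{\Bet}$ by descending induction on $d$---is actually a point the paper's own proof leaves implicit, so your version is slightly more complete; a minor caveat is that your parenthetical ``direct'' argument only spells out $\Alp_i=0$, while the conclusion you draw (support in $\{1,\dots,i-1\}$) needs $\Alp_k=0$ for all $k\geq i$, but this is covered by your citation of Remark~\ref{rem:PD} and the same reasoning extends immediately.
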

	
	\begin{proof}
		We first recall:
		\begin{equation}\tag{\ref{eq:fi}}
			x_i \circ F = \lambda_i x_i + \sum_{\Alp \in R_i} \nu_{i,\Alp} x^{\Alp},
		\end{equation}
		for some $\nu_{i,\Alp} \in \CC$.
		
		One can remark that $R_i$ has been defined precisely as the equivalence class of $\mathds{1}_i = (0, \ldots, 0, 1, 0, \ldots, 0)$ with the $1$ at the $i$-th position. Moreover, $\mathds{1}_i$ is minimal for the order $\succ_{\lambda}$ in its equivalence class. We indeed know (cf. remark \ref{rem:PD}) that for all $\Alp \in R_i$, there exists $k<i$ such that $\alpha_k \neq 0$, which implies that $\Alp$ is bigger than $\mathds{1}_i$ for the lexicographic order.
		
		We thus showed that all $x_i \circ F \in \Ho_{[\mathds{1}_i]}$.
		
		Now take $\Bet = (\beta_1, \ldots, \beta_d) \in \NN^d$, then:
		$$
		x^{\Bet} \circ F = \prod_{i=1}^{d} \left(\lambda_i x_i + \sum_{\Alp \in R_i} \nu_{i,\Alp} x^{\Alp} \right)^{\beta_i}.
		$$
		
		Then, by lemma \ref{lem:prod}, the right-hand term is in $\Ho_{[\sum \beta_i \mathds{1}_i]} = \Ho_{[\Bet]}$. Moreover, the monomials that appear once we develop all have exponents bigger than $\Bet$ because of the compatibility of $\succ_{\lambda}$ with the sum.
	\end{proof}
	
	The spaces $\Ho_\gamma$ are moreover stable under the action of $F$:
	
	\begin{lem}\label{lem:stab}
		For any $\gamma \in \Gamma$ we have $\Ho_\gamma \circ F = \Ho_\gamma$ where $\Ho_\gamma \circ F = \{\phi_\gamma \circ F, \phi_\gamma \in \Ho_\gamma\}$.
	\end{lem}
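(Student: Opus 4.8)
The plan is to derive both inclusions from Lemma \ref{lem:Bet}. First I would check the easy inclusion $\Ho_\gamma \circ F \subseteq \Ho_\gamma$. Since $\Ho_\gamma$ is spanned by the monomials $x^{\Bet}$ with $[\Bet] = \gamma$, it is enough to look at the image of one such monomial, and Lemma \ref{lem:Bet} gives
\[
x^{\Bet} \circ F = \lambda^{\Bet} x^{\Bet} + \sum_{\substack{[\Alp] = \gamma \\ \Alp \succ \Bet}} \nu_{\Alp}\, x^{\Alp},
\]
which is again a $\CC$-linear combination of monomials of $\lambda$-degree $\gamma$, hence lies in $\Ho_\gamma$. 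By linearity, $\Ho_\gamma \circ F \subseteq \Ho_\gamma$.

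For the reverse inclusion I would argue by dimension count. By the previous paragraph, $\phi \mapsto \phi \circ F$ is a $\CC$-linear endomorphism of the finite-dimensional space $\Ho_\gamma$ (its finiteness is exactly Remark \ref{rems:ec}(1), since the monomial basis $\{x^{\Bet} : [\Bet] = \gamma\}$ is a finite set). Ordering this basis increasingly for $\succ_{\lambda}$, the formula displayed above shows that the matrix of this endomorphism is triangular with diagonal entries $\lambda^{\Bet}$; as $F$ is a diffeomorphism, the eigenvalues of $D_0 F$ are all nonzero, so every $\lambda^{\Bet} \neq 0$ and the matrix is invertible. Hence $\phi \mapsto \phi \circ F$ is a bijection of $\Ho_\gamma$ onto itself, and therefore $\Ho_\gamma = \Ho_\gamma \circ F$.

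There is essentially no obstacle here: one could equally observe that injectivity of $\phi \mapsto \phi \circ F$ is automatic because $F$ is invertible (if $\phi \circ F = 0$ then $\phi = 0$), and injectivity together with finite-dimensionality of $\Ho_\gamma$ forces surjectivity. The only point worth keeping in mind is that the triangularity of the matrix is precisely the content of Lemma \ref{lem:Bet}, so this lemma is a direct corollary of the previous one.
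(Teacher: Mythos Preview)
Your proof is correct and follows essentially the same route as the paper: use Lemma \ref{lem:Bet} to see that $\phi \mapsto \phi \circ F$ is a linear endomorphism of the finite-dimensional space $\Ho_\gamma$, then observe that in the basis of monomials ordered by $\prec_\lambda$ its matrix is triangular with nonzero diagonal entries (all equal to $\lambda^\gamma$), hence invertible. Your additional remark that injectivity is automatic from the invertibility of $F$ is a pleasant shortcut, but the core argument matches the paper's.
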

	
	\begin{proof}
		We first remark that the map $L : \phi_\gamma \mapsto \phi_\gamma \circ F$ is a linear map on $\Ho_\gamma$.
		
		Lemma \ref{lem:Bet} shows that the image of the basis vectors $x^{\Bet}$ for $[\Bet] = \gamma$ by $L$ is always in the vector space $\Ho_\gamma$, so $\Ho_\gamma \circ F \subseteq \Ho_\gamma$.
		
		Moreover if we order the monomials $x^{\Bet}$ which are a basis of $\Ho_\gamma$ according to our order $\prec_{\lambda}$ ($x^{\Alp} < x^{\Bet}$ if $\Alp \prec \Bet$), then the matrix representing $L$ is triangular with only $\lambda^\gamma$ on the diagonal, so $L$ is invertible and we get the equality $\Ho_\gamma \circ F = \Ho_\gamma$.
	\end{proof}

	Any holomorphic function $\phi \in \mathcal{O}_{\CC^d, 0}$ can be uniquely decomposed in its $\lambda$-homogeneous parts:
	$$
	\phi = \sum_{\gamma \in \Gamma} \phi_\gamma,
	$$
	with $\phi_\gamma \in \Ho_\gamma$. Moreover, we have:
	$$
	\phi \circ F = \sum_{\gamma \in \Gamma} \phi_\gamma \circ F,
	$$
	with $\phi_\gamma \circ F$ also in $\Ho_\gamma$ (cf. Lemma \ref{lem:stab}).
	
	We will note by $\phi_0$ the element of degree $[(0, \ldots, 0)]$, which is the constant term of $\phi$.
	
	Let $\phi = \sum_{\gamma \in \Gamma} \phi_\gamma$ and $\psi = \sum_{\gamma \in \Gamma} \psi_\gamma$ be elements in $\mathcal{O}_{\CC^d, 0}$ decomposed into the sum of their $\lambda$-homogeneous parts. Then:
	$$
	\phi \psi = \sum_{\delta \in \Gamma} \sum_{\substack{\gamma, \gamma' \in \Gamma \\ \gamma + \gamma' = \delta}} \phi_\gamma \psi_{\gamma'}
	$$
	with $\displaystyle \sum_{\gamma + \gamma' = \delta} \phi_\gamma \psi_{\gamma'} \in \Ho_{\delta}$ the $\lambda$-homogeneous part of degree $\delta$.
	
	\subsection{Invariance of $\Ideal$ under $\phi$}\label{ssect:Inv}
	
	Let $\Ideal$ be a finitely generated ideal of holomorphic functions which is invariant under the action of $F$, \textit{i.e.} for all $\phi \in \Ideal$, we have $\phi \circ F \in \Ideal$.
	
	Let $\phi^{(1)}, \ldots, \phi^{(r)}$ be a minimal set of generators of $\Ideal$ (\textit{i.e.} the number of generators is minimal). Since $\phi^{(i)} \circ F \in \Ideal$, there exists $A^i_j \in \mathcal{O}_{\CC^d, 0}$ for $i, j = 1, \ldots, r$ such that:
	\begin{equation}\label{eq:A}
		\phi^{(i)} \circ F = \sum_{j=1}^r A^i_j \phi^{(j)}.
	\end{equation}
	
	Let $A = (A^i_{j})_{i,j}$ and $A_0 = (A^i_{j,0})_{i,j} = (A^i_{j} (0))_{i,j}$ be the matrix with the constant terms. We can choose new generators for $\Ideal$ by making linear combinations of the $\phi^{(i)}$ so that the matrix $A_0$ is in lower Jordan normal form, thing that we will assume from now on.
	
	We recall that we can decompose:
	$$
	\phi^{(i)} = \sum_{\gamma \in \Gamma} \phi^{(i)}_\gamma
	$$
	with $\phi^{(i)}_\gamma \in \Ho_\gamma$.
	
	\begin{definition}\label{def:P(i)}
		For all $i \in \{1, \ldots, r\}$ we define
		\begin{equation}\label{eq:P}
			\gamma_i :=
			\begin{cases}
				\log_{\lambda} (A^i_{i, 0}) & \text{if } A^i_{i, 0} \in \lambda^{\Gamma}\\
				+ \infty & \text{otherwise}
			\end{cases}
			\qquad \text{ and } \qquad
			P^{(i)} :=
			\begin{cases}
				\phi^{(i)}_{\gamma_i} & \text{if } \gamma_i \in \Gamma \\
				0 & \text{if } \gamma_i = + \infty.
			\end{cases}
		\end{equation}
		
		Let $\Jdeal$ be the ideal generated by the $P^{(i)}$.
	\end{definition}
	
	\begin{rem}\label{rem:Pi}
		We will show that the set of polynomials $\{P^{(i)}\}_{1 \leq i \leq r}$ generates $\Ideal$. As the number of generators $r$ has been chosen to be minimal, this will imply that all $P^{(i)}$ are different from $0$.
	\end{rem}

	We can now reorder the coordinates so that if $i < j$, then $\gamma_i \leq \gamma_j$. This will not change the fact that $A_0$ is in lower Jordan normal form. This property will be useful later on as it will imply (see the proof of proposition \ref{prop:IJ}) that $P^{(i)} \circ F \in \langle P^{(j)} \rangle_{j \leq i}$.

	\begin{ex}
		Let $F : \CC^2 \to \CC^2$ be defined by: $F (x, y) = (\tfrac{x}{2}, \tfrac{y}{4})$.
		
		Set $\phi = x^2 - y$ and $\psi = x (x^2 - y) + x^5$, then the ideal $\langle \phi, \psi \rangle$ is invariant under the action of $F$ and we have:
		$$
		\phi \circ F = \tfrac{1}{4} \phi \: \text{ and } \: \psi \circ F = \tfrac{1}{32} \psi + (\tfrac{1}{4} - \tfrac{1}{32}) x \phi.
		$$
		
		The matrix $A_0$ is given by 
		$\begin{pmatrix}
			1/4 & 0\\
			0 & 1/32
		\end{pmatrix}$
		so the polynomials $P$ and $Q$ defined in definition \ref{def:P(i)} are $P = x^2 - y$ and $Q = x^5$. Notice that $Q$ is not the initial part of $\psi$, which is $x (x^2 - y)$.
	\end{ex}
	
	\begin{lem}\label{lem:Phi}
		Let $\phi^{(1)}, \ldots, \phi^{(r)}$ as above, and set $\Phi := \begin{pmatrix}
			\phi^{(1)}\\
			\vdots\\
			\phi^{(r)}
		\end{pmatrix}$. Then for any matrix $B \in  M_{r} (\mathcal{O}_{\CC^d, 0})$ such that $B \Phi = 0$, we have $B (0) = 0$.
	\end{lem}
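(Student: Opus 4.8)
The plan is to argue by contradiction, using only that $\mathcal{O}_{\CC^d,0}$ is a local ring and that the number $r$ of generators was chosen minimal. Suppose $B\Phi = 0$ but $B(0) \neq 0$. Then some entry of the constant matrix $B(0)$ is nonzero, say $B^i_k(0) \neq 0$. A holomorphic germ that does not vanish at the origin is a unit of $\mathcal{O}_{\CC^d,0}$, so $B^i_k$ is invertible in $\mathcal{O}_{\CC^d,0}$.

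Reading off the $i$-th row of the relation $B\Phi = 0$ gives
$$
\sum_{j=1}^r B^i_j\, \phi^{(j)} = 0 .
$$
Isolating the $k$-th term and dividing by the unit $B^i_k$ yields
$$
\phi^{(k)} = -\,(B^i_k)^{-1} \sum_{j \neq k} B^i_j\, \phi^{(j)} ,
$$
so $\phi^{(k)}$ belongs to the ideal generated by $\{\phi^{(j)} : j \neq k\}$. Hence these $r-1$ germs already generate $\Ideal$, contradicting the minimality of $r$. Therefore $B(0) = 0$.

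I do not expect a genuine obstacle here: this is the standard fact that any relation among a minimal system of generators of a finitely generated ideal in a Noetherian local ring has all its coefficients in the maximal ideal, i.e. a form of Nakayama's lemma, and the computation above is just an explicit instance of it. If one prefers a more structural formulation, one can instead reduce modulo $\mathfrak{m}_{\CC^d,0}\Ideal$: since $B^i_j - B^i_j(0) \in \mathfrak{m}_{\CC^d,0}$ for every $i,j$, the $i$-th row of $B\Phi = 0$ becomes $\sum_j B^i_j(0)\,\overline{\phi^{(j)}} = 0$ in the $\CC$-vector space $\Ideal/\mathfrak{m}_{\CC^d,0}\Ideal$, and since a minimal generating set maps to a basis of this space, every $B^i_j(0)$ vanishes. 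The only hypothesis genuinely used is that $\Ideal$ is finitely generated, which guarantees the existence of a minimal generating set and that its cardinality equals $\dim_\CC \Ideal/\mathfrak{m}_{\CC^d,0}\Ideal$.
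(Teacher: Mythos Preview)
Your argument is correct and matches the paper's own proof exactly: from a nontrivial relation $\sum_j B^i_j\phi^{(j)}=0$ with some $B^i_k(0)\neq 0$, the unit $B^i_k$ lets you solve for $\phi^{(k)}$ in terms of the remaining generators, contradicting minimality of $r$. The additional Nakayama-style reformulation you give is a nice bonus but not needed here.
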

	
	\begin{proof}
		If we pick $r$ holomorphic functions $u_1, \ldots, u_r$ such that $\sum_j u_j \phi^{(j)} = 0$, then $u_j (0) = 0$ for all $j$. Indeed, we could otherwise express on of the $\phi^{(i)}$ in terms of the others which would contradict the minimality of the number of generators.
	\end{proof}
	
	\begin{rem}\label{rem:uniqueP}
		While the polynomials $P^{(i)}$ depend on the choice of the generators $\phi^{(i)}$, they are uniquely determined once the generators are fixed. The lemma indeed shows that even if $A$ is not uniquely determined, its linear part $A_0$ is, and the polynomials $P^{(i)}$ only depend on $A_0$.
	\end{rem}
	
	\subsection{First inclusion}\label{ssect:IJ}
	
	\begin{prop}\label{prop:IJ}
		With the previous notations, we have $\Ideal \subseteq \Jdeal$.
	\end{prop}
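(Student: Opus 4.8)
The plan is to prove the graded refinement that every $\lambda$-homogeneous part $\phi^{(i)}_\gamma$ of a generator lies in $\Jdeal$, and then to recover $\phi^{(i)}\in\Jdeal$ from it. For this last reduction I would use that, since all the $\lambda_k$ are nonzero and $|\lambda^{-\gamma}|$ is unbounded and nondecreasing along $(\Gamma,\succeq)$, a monomial of $\lambda$-degree $\gamma$ has $\mathfrak{m}_{\CC^d,0}$-adic order at least $\log|\lambda^{-\gamma}|/\log(1/\min_k|\lambda_k|)$; hence the partial sums $\sum_{\delta\preceq\gamma}\phi^{(i)}_\delta$ converge $\mathfrak{m}$-adically to $\phi^{(i)}$, and as $\Jdeal$ is $\mathfrak{m}$-adically closed in the Noetherian local ring $\mathcal{O}_{\CC^d,0}$ (Krull), it suffices to place each $\phi^{(i)}_\gamma$ in $\Jdeal$.

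The engine is the identity obtained by extracting the $\lambda$-homogeneous component of degree $\gamma$ from \eqref{eq:A}: using Lemma \ref{lem:stab} (so that $\phi^{(j)}_\epsilon\circ F\in\Ho_\epsilon$) and Lemma \ref{lem:prod}, one gets
\[
  L_\gamma\bigl(\phi^{(i)}_\gamma\bigr) \;=\; \sum_{j} A^i_{j,0}\,\phi^{(j)}_\gamma \;+\; \sum_{j}\ \sum_{\substack{\delta+\epsilon=\gamma\\ \delta\succ 0}} (A^i_j)_\delta\,\phi^{(j)}_\epsilon,
\]
where $(A^i_j)_\delta$ is the $\lambda$-homogeneous part of $A^i_j$ of degree $\delta$, where $L_\gamma:\Ho_\gamma\to\Ho_\gamma$, $\psi\mapsto\psi\circ F$, is the invertible operator of Lemma \ref{lem:stab} (triangular with all diagonal entries $\lambda^\gamma$), and where every $\epsilon$ in the second sum satisfies $\epsilon\prec\gamma$, since $\delta\succ0$ forces $|\lambda^{-\delta}|>1$.

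I then induct on $\gamma\in\Gamma\cong\NN$, with induction hypothesis that $\phi^{(j)}_\epsilon\in\Jdeal$ for all $j$ and all $\epsilon\prec\gamma$. First one checks that $W_\gamma:=\Jdeal\cap\Ho_\gamma$, which is spanned by the products $hP^{(k)}$ with $h$ $\lambda$-homogeneous and $hP^{(k)}\in\Ho_\gamma$ (forcing $[h]+\gamma_k=\gamma$, so $\gamma_k\preceq\gamma$), is $L_\gamma$-invariant: on such a vector $(hP^{(k)})\circ F=(h\circ F)(P^{(k)}\circ F)$, and the identity above at degree $\gamma_k$ expresses $P^{(k)}\circ F$ as a combination of $\phi^{(j)}_\epsilon$'s with $\epsilon\preceq\gamma_k\preceq\gamma$; the ones with $\epsilon\prec\gamma$ lie in $\Jdeal$ by the induction hypothesis, and when $\gamma_k=\gamma$ the surviving degree-$\gamma$ contributions are $\lambda^\gamma P^{(k)}$ and (if $A^k_{k-1,0}\neq0$) $P^{(k-1)}$, which lie in $\Jdeal$ precisely because the $P^{(k)}$ generate it. Hence $L_\gamma$ descends to $\bar L_\gamma$ on $\bar V_\gamma:=\Ho_\gamma/W_\gamma$, with $\bar L_\gamma-\lambda^\gamma$ still nilpotent. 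Reducing the identity modulo $\Jdeal$ (the second sum dies by the induction hypothesis) and using that $A_0$ is in lower Jordan form gives, in $\bar V_\gamma$,
\[
  \bigl(\bar L_\gamma - A^i_{i,0}\bigr)\,\overline{\phi^{(i)}_\gamma} \;=\; A^i_{i-1,0}\,\overline{\phi^{(i-1)}_\gamma},\qquad 1\le i\le r,
\]
with the convention $\overline{\phi^{(0)}_\gamma}:=0$. A second, inner induction on $i$ finishes: if $\gamma_i=\gamma$ then $\phi^{(i)}_\gamma=P^{(i)}\in\Jdeal$ so $\overline{\phi^{(i)}_\gamma}=0$; if $\gamma_i\ne\gamma$ then $A^i_{i,0}\ne\lambda^\gamma$ (either $A^i_{i,0}\notin\lambda^\Gamma$, or $A^i_{i,0}=\lambda^{\gamma_i}\ne\lambda^\gamma$ by injectivity of $\gamma\mapsto\lambda^\gamma$), so $\bar L_\gamma-A^i_{i,0}$ is invertible, while the right-hand side vanishes by the inner hypothesis (if $A^i_{i-1,0}\ne0$ then $\gamma_{i-1}=\gamma_i\ne\gamma$), so again $\overline{\phi^{(i)}_\gamma}=0$. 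Both inductions close, yielding $\phi^{(i)}_\gamma\in\Jdeal$ for all $i,\gamma$, hence $\Ideal\subseteq\Jdeal$.

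Along the way one actually obtains $\phi^{(j)}_\gamma\in W_\gamma\subseteq\langle P^{(k)}:\gamma_k\preceq\gamma\rangle$; feeding this back into the degree-$\gamma_i$ identity for $P^{(i)}\circ F$ and using the reordering $i<j\Rightarrow\gamma_i\le\gamma_j$ then shows $P^{(i)}\circ F\in\langle P^{(j)}\rangle_{j\le i}$, the auxiliary fact announced before the statement. The step I expect to be the crux — and where one must take care not to argue circularly — is the $L_\gamma$-invariance of $W_\gamma$: it leans simultaneously on the induction hypothesis, on the identity above, and on the $P^{(k)}$ being \emph{generators} of $\Jdeal$ rather than merely elements of $\Ideal$.
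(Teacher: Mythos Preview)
Your argument is correct and shares the paper's core strategy: extract the $\lambda$-degree-$\gamma$ component of \eqref{eq:A}, then run a double induction on $(\gamma,i)$ using that $L_\gamma-\zeta$ is invertible whenever $\zeta\neq\lambda^\gamma$. The packaging differs. The paper builds a chain of intermediate ideals $\Jdeal_{i,\gamma}=\langle\phi^{(j)}_\delta:(j,\delta)\le(i,\gamma)\rangle$ and proves simultaneously that each is $F$-invariant and generated by the $P^{(j)}$ with $(j,\gamma_j)\le(i,\gamma)$; Lemma~\ref{lem:kdeal} is then applied with $\Kdeal=\Jdeal_{j,\delta}$, whose $F$-invariance is already part of the induction hypothesis. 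You instead work directly with $\Jdeal$, establish that $W_\gamma=\Jdeal\cap\Ho_\gamma$ is $L_\gamma$-stable (this is your crux, and your argument for it is sound), and then pass to the quotient $\Ho_\gamma/W_\gamma$. Your route is a bit more self-contained for the bare inclusion $\Ideal\subseteq\Jdeal$, and you are more explicit than the paper about the convergence step (the paper simply asserts ``$\phi^{(i)}$ is a linear combination of the $\phi^{(i)}_\gamma$'', leaving the $\mathfrak m$-adic closure implicit). Conversely, the paper's intermediate-ideal bookkeeping immediately delivers the finer fact $\Jdeal_{i,\gamma}=\langle P^{(j)}\rangle_{(j,\gamma_j)\le(i,\gamma)}$ used in Remark~\ref{rem:ordcoord}; your final paragraph recovers only $\phi^{(j)}_\gamma\in\langle P^{(k)}:\gamma_k\preceq\gamma\rangle$, which is slightly coarser when several $\gamma_k$ coincide with $\gamma_i$, so the deduction of $P^{(i)}\circ F\in\langle P^{(j)}\rangle_{j\le i}$ would need one more line to handle ties.
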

	
	We will prove by double induction on $\gamma \in \Gamma$ and $i \in \{1, \ldots, r\}$ that all of the weighted homogeneous polynomials $\phi^{(i)}_\gamma$ are in $\Jdeal$. We can then conclude as $\phi^{(i)}$ is a linear combination of the $\phi^{(i)}_\gamma$. The proof relies on the following lemma:
	
	\begin{lem}\label{lem:kdeal}
		Let $\Kdeal$ be an ideal of holomorphic functions of $(\mathbb{C}^d, 0)$ such that $\Kdeal \circ F \subseteq \Kdeal$. Let $\gamma \in \Gamma$, $\phi \in \Ho_\gamma$ and $\zeta \neq \lambda^{\gamma}$ be a complex number.
		
		Then $\phi \in \Kdeal$ if and only if $\phi \circ F - \zeta \phi \in \Kdeal$.
	\end{lem}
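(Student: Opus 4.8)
The plan is to reduce the statement to a linear-algebra fact about the operator $L\colon \Ho_\gamma \to \Ho_\gamma$, $\psi \mapsto \psi \circ F$, which by Lemma~\ref{lem:stab} is a well-defined automorphism of the finite-dimensional space $\Ho_\gamma$; moreover, from the proof of that lemma, $L$ is triangular with all diagonal entries equal to $\lambda^\gamma$, so $\lambda^\gamma$ is its only eigenvalue.

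First I would dispose of the easy implication: if $\phi \in \Kdeal$, then $\phi \circ F \in \Kdeal$ because $\Kdeal$ is $F$-invariant, and $\zeta\phi \in \Kdeal$ because $\Kdeal$ is an ideal, hence $\phi \circ F - \zeta\phi \in \Kdeal$. For the converse, I would set $\psi := \phi \circ F - \zeta\phi = (L - \zeta\,\mathrm{Id})(\phi) \in \Kdeal$ and solve for $\phi$. Since $\zeta \neq \lambda^\gamma$, the operator $L - \zeta\,\mathrm{Id}$ is invertible on $\Ho_\gamma$; writing $L - \zeta\,\mathrm{Id} = N + (\lambda^\gamma - \zeta)\,\mathrm{Id}$ with $N := L - \lambda^\gamma\,\mathrm{Id}$ nilpotent, inverting the finite Neumann series gives a polynomial $q \in \CC[t]$ with $(L - \zeta\,\mathrm{Id})^{-1} = q(L)$, so $\phi = q(L)(\psi)$.

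The final step is to check that $q(L)(\psi) \in \Kdeal$. Since $L^k(\psi) = \psi \circ F^k$ (the $k$-fold composition) and $\Kdeal \circ F \subseteq \Kdeal$, an immediate induction on $k$ shows $\psi \circ F^k \in \Kdeal$ for all $k \geq 0$; writing $q(t) = \sum_k c_k t^k$ then yields $\phi = q(L)(\psi) = \sum_k c_k\,(\psi \circ F^k) \in \Kdeal$. The only point requiring a little care is passing from the mere invertibility of $L - \zeta\,\mathrm{Id}$ to the explicit fact that its inverse is a polynomial in $L$ — without this one cannot transport the conclusion back from $\Ho_\gamma$ to the ideal $\Kdeal$ — but this follows cleanly from the nilpotency of $N$; everything else is formal.
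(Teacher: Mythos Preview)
Your argument is correct. Both proofs hinge on the same linear-algebraic fact: the operator $T := (\psi \mapsto \psi \circ F) - \zeta\,\mathrm{Id}$ is an automorphism of the finite-dimensional space $\Ho_\gamma$, being triangular with nonzero diagonal $\lambda^\gamma - \zeta$ in the monomial basis. Where you and the paper diverge is in how you transport this back to $\Kdeal$. The paper simply observes that $\Kdeal \cap \Ho_\gamma$ is a finite-dimensional subspace of $\Ho_\gamma$ stable under $T$; since $T$ is injective on $\Ho_\gamma$, its restriction to this subspace is a bijection, so $T^{-1}$ preserves $\Kdeal \cap \Ho_\gamma$ as well. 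You instead exploit the extra structure that $L - \lambda^\gamma\,\mathrm{Id}$ is nilpotent to write $T^{-1} = q(L)$ explicitly, and then use $\Kdeal \circ F \subseteq \Kdeal$ iteratively. The paper's route is shorter and avoids the Neumann-series computation; your route is more constructive and makes visible exactly how $\phi$ is rebuilt from $\psi \circ F^k$. Either way the content is the same, and the ``little care'' you flag about needing $T^{-1}$ to be a polynomial in $L$ is precisely what the paper sidesteps by working inside the finite-dimensional invariant subspace.
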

	
	\begin{proof}
		$\Rightarrow$: Trivial.
		
		$\Leftarrow$: The function $L : \Ho_\gamma \to \Ho_\gamma$ defined by $L (\psi) = \psi \circ F - \zeta \psi$ is a linear map on the vector space $\Ho_\gamma$ and is invertible: if we order the monomials $x^{\Alp}$ which are a basis of $\Ho_\gamma$ according to our order $\prec_{\lambda}$ ($x^{\Alp} < x^{\Bet}$ if $\Alp \prec \Bet$), then the matrix representing $L$ is upper triangular with $\lambda^{\gamma} - \zeta$ on the diagonal. Moreover, the set $\Kdeal \cap \Ho_\gamma$ is a linear subspace of $\Ho_\gamma$ invariant under the action of $L$. Hence we get that its inverse stabilizes $\Kdeal \cap \Ho_\gamma$ too.
	\end{proof}
	
	\begin{proof}[Proof of proposition \ref{prop:IJ}]
		Let $\Jdeal'$ be the ideal generated by all the polynomials $\phi^{(i)}_\gamma$ for all $i \in \{1, \ldots, r\}$ and all $\gamma \in \Gamma$. Clearly $\Ideal \subseteq \Jdeal'$ and $\Jdeal \subseteq \Jdeal'$. We thus need to prove $\Jdeal' \subseteq \Jdeal$ which implies $\Ideal \subseteq \Jdeal$. In order to do so, we introduce for every $\gamma \in \Gamma$ and $i \in \{1, \ldots, r\}$ the set:
		$$
		S_{i,\gamma} = \{(j,\delta),  \delta \prec_{\lambda} \gamma \text{ or } \delta=\gamma \text{ and } j \leq i\}.
		$$
		
		We say that $(i,\gamma) \leq (j,\delta)$ if $S_{i,\gamma} \subseteq S_{j,\delta}$.
		
		We also define $\Jdeal_{i,\gamma} = \langle \phi^{(j)}_\delta, (j,\delta) \in S_{i,\gamma} \rangle$.
		
		We will prove by induction on $\gamma$ and $i$ that $\Jdeal_{i,\gamma}$ is invariant under the action of $F$ and that it is generated by the polynomials $P^{(j)} = \phi^{(j)}_{\gamma_j}$ such that $(j, \gamma_j) \in S_{i,\gamma}$.
		
		We first recall that we have for all $i \in \{1, \ldots, r\}$:
		
		\begin{align}
			\phi^{(i)} \circ F &= \sum_{\gamma \in \Gamma} \phi^{(i)}_\gamma \circ F\label{eq:somme}\\
			&=  A^i_1 \phi^{(1)} + \ldots + A^i_r \phi^{(r)}.\label{eq:decomp}
		\end{align}

		\underline{Base case}
		
		For all $i \in \{1, \ldots, r \}$ we have $\phi^{(i)} (0) = 0$ as $0 \in (X, 0)$, so we see that $\Jdeal_{1, 0} = \{0\}$ and the statement is true at the first step.

		\underline{Induction step}
		
		Let us now prove it for the rank $(i, \gamma)$. We suppose that the property is true for the preceding pair $(j,\delta)$ (which is equal to $(i-1, \gamma)$ if $i \neq 1$ and $(r,\gamma')$ with $\gamma'$ preceding $\gamma$ in $\Gamma$ if $i=1$).
		
		If we look at the parts of $\lambda$-degree $\gamma$ in \eqref{eq:somme} and \eqref{eq:decomp}, we get:	
		$$
		\phi^{(i)}_\gamma \circ F = \sum_{k=1}^r \sum_{\substack{\gamma', \gamma'' \in \Gamma \\ \gamma'+\gamma''=\gamma}} A^{i}_{k,\gamma'} \phi^{(k)}_{\gamma''} =  A^i_{i,0} \phi^{(i)}_\gamma + \sum_{(i', \gamma') < (i, \gamma)} A^{i}_{i', \gamma - \gamma'} \phi^{(i')}_{_\gamma'}
		$$
		where $\gamma - \gamma'$ is the equivalent class $\log_{\lambda} (\lambda^{\gamma}/\lambda^{\gamma'})$ if it exists. 
		The second equality can be written as all of the indexes in the first sum can be taken smaller or equal to $(i, \gamma)$. We indeed know that $\gamma'+\gamma''=\gamma$ implies that $\gamma'' \preceq \gamma$ with equality only if $\gamma' = 0$. But then $A^i_{j,0} \neq 0$ only if $j \leq i$ as $A_0$ is lower triangular.
		
		Therefore, all of the polynomials in the second sum are in $\Jdeal_{j,\delta}$. Hence
		\begin{equation}\label{eq:inI}
			\phi^{(i)}_\gamma \circ F - A^i_{i,0} \phi^{(i)}_\gamma \in \Jdeal_{j,\delta}.
		\end{equation}
		
		We then have two possibilities:
		
		- if $A^i_{i,0} \neq \lambda^{\gamma}$, by lemma \ref{lem:kdeal} we have that $\phi^{(i)}_\gamma \in \Jdeal_{j,\delta}$. Thus $\Jdeal_{i,\gamma} = \Jdeal_{j,\delta}$ and the properties are still verified.
		
		- if $A^i_{i,0} = \lambda^{\gamma}$, then $P^{(i)} = \phi^{(i)}_\gamma$. So $\Jdeal_{i,\gamma} = \Jdeal_{j,\delta} \oplus \langle P^{(i)} \rangle$. It is generated by some of the polynomials $P^{(j)}$ as $\Jdeal_{j,\delta}$ is, and \eqref{eq:inI} shows that it is invariant under the action of $F$.
		
		We have thus proved the property at rank $(i, \gamma)$, that is for all $(i,\gamma) \in \{1, \ldots, r\} \times \Gamma$,  $\Jdeal_{i,\gamma} \subseteq \Jdeal$.
	\end{proof}
	
	\begin{rem}\label{rem:ordcoord}
		As we have ordered the coordinates so that $\gamma_i \leq \gamma_j$ if $i < j$, we see that the polynomials $P^{(i)}$ are met ``in the right order'' in the proof: for all $i \in \{1, \ldots, r\}$ and $\gamma \in \Gamma$, we have:
		$$
		\Jdeal_{i,\gamma} = \langle P^{(j)} \rangle_{(j, \gamma_j) \leq (i, \gamma)}.
		$$
		As $\gamma_i \leq \gamma_j$ if $i < j$, it implies that for all $(i, \gamma)$, there exists $j \in \{0, \ldots, r\}$ such that $\Jdeal_{i,\gamma} = \langle P^{(1)}, \ldots, P^{(j)} \rangle$ and we thus get: $P^{(i)} \circ F \in \langle P^{(j)} \rangle_{j \leq i}$.
		
		Moreover, we also have that for all $i < j$, $\phi^{(i)}_{\gamma_j} \in \langle P^{(k)} \rangle_{(k, \gamma_k) \leq (i, \gamma_j)}$.
	\end{rem}

	\subsection{Second inclusion}\label{ssect:JI}
	
	We now only need to prove that the ideal generated by the $P^{(i)}$ is $\Ideal$ to finish the proof of theorem \ref{thm:InvId}.
	
	\begin{prop}\label{prop:JI}
		With the previous notations, we have $\Jdeal \subseteq \Ideal$.
	\end{prop}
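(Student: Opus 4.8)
The plan is to deduce $\Jdeal\subseteq\Ideal$ from the already‑established inclusion $\Ideal\subseteq\Jdeal$ (Proposition~\ref{prop:IJ}) by a Nakayama‑type argument, exploiting the fine description of the $\lambda$‑homogeneous parts $\phi^{(i)}_\gamma$ obtained in the course of proving that proposition, together with the minimality of the generating system $\phi^{(1)},\dots,\phi^{(r)}$ (Lemma~\ref{lem:Phi}). The aim is to show that the images of $\phi^{(1)},\dots,\phi^{(r)}$ span the finite–dimensional $\CC$‑vector space $\Jdeal/\mathfrak m\Jdeal$: since $\Jdeal$ is by definition generated by $P^{(1)},\dots,P^{(r)}$, it is enough to express, modulo $\mathfrak m\Jdeal$, the $\phi^{(i)}$ in terms of the $P^{(i)}$ through an invertible $\CC$‑linear substitution, and Nakayama's lemma will then force $\langle\phi^{(1)},\dots,\phi^{(r)}\rangle=\Jdeal$.

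The first ingredient is an elementary remark on the $\lambda$‑gradation: if $Q\in\Ho_\gamma$ lies in an ideal generated by $\lambda$‑homogeneous polynomials $Q_1,\dots,Q_s$ with $Q_\ell\in\Ho_{\delta_\ell}$, then one can write $Q=\sum_\ell h_\ell Q_\ell$ with $h_\ell\in\Ho_{\gamma-\delta_\ell}$ (the class with $(\gamma-\delta_\ell)+\delta_\ell=\gamma$, when it exists, and $h_\ell=0$ otherwise). This follows by writing $Q=\sum_\ell g_\ell Q_\ell$ with arbitrary $g_\ell$, expanding each $g_\ell$ into its $\lambda$‑homogeneous components (Lemma~\ref{lem:prod}), and keeping the part of $\lambda$‑degree $\gamma$. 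In particular such a coefficient $h_\ell$ is a constant exactly when $\delta_\ell=\gamma$, and it lies in $\mathfrak m$ when $\delta_\ell\prec\gamma$.

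The second — and main — step applies this to the $\phi^{(i)}_\gamma$. Recall from the proof of Proposition~\ref{prop:IJ} that $\phi^{(i)}_{\gamma_i}=P^{(i)}$, and that for $\gamma\neq\gamma_i$ the part $\phi^{(i)}_\gamma$ lies in $\Jdeal_{j,\delta}$ for $(j,\delta)$ the pair immediately preceding $(i,\gamma)$. Unwinding the sets $S_{i,\gamma}$ and using that the coordinates are ordered so that $i<j$ implies $\gamma_i\leq\gamma_j$, this says
$$
\phi^{(i)}_\gamma\in\langle P^{(k)}:\gamma_k\prec\gamma\rangle\ \text{ if }\gamma\succ\gamma_i,\qquad
\phi^{(i)}_\gamma\in\langle P^{(k)}:\gamma_k\preceq\gamma\rangle\ \text{ if }\gamma\prec\gamma_i .
$$
By the graded remark above, $\phi^{(i)}_\gamma\in\mathfrak m\Jdeal$ for every $\gamma\succ\gamma_i$, while for $\gamma\prec\gamma_i$ one gets $\phi^{(i)}_\gamma\equiv\sum_{k:\gamma_k=\gamma}c^i_k P^{(k)}\pmod{\mathfrak m\Jdeal}$ with $c^i_k\in\CC$. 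Since $\Jdeal$ is generated by $\lambda$‑homogeneous polynomials, an element of $\Jdeal$ all of whose $\lambda$‑homogeneous parts lie in $\mathfrak m\Jdeal$ lies in $\mathfrak m\Jdeal$ as well; summing over $\gamma$ (the classes $\preceq\gamma_i$ being finite in number by Remark~\ref{rems:ec}) therefore yields
$$
\phi^{(i)}\equiv P^{(i)}+\sum_{k:\ \gamma_k\prec\gamma_i}c^i_k P^{(k)}\pmod{\mathfrak m\Jdeal}.
$$
I expect this step to be the delicate one: one has to check carefully that, after passing to $\lambda$‑homogeneous coefficients, the only constant contributions to $\phi^{(i)}$ modulo $\mathfrak m\Jdeal$ are $P^{(i)}$ itself and the $P^{(k)}$ with $\gamma_k\prec\gamma_i$ — in particular none coming from the $P^{(k)}$ lying in the same class $\gamma_k=\gamma_i$ — and that the tail $\sum_{\gamma\succ\gamma_i}\phi^{(i)}_\gamma$ is legitimately absorbed into $\mathfrak m\Jdeal$ (closedness of ideals of $\mathcal O_{\CC^d,0}$).

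Finally, since $\gamma_k\prec\gamma_i$ forces $k<i$, the congruence above says that the matrix $T=(T_{ik})$ with $T_{ii}=1$, $T_{ik}=c^i_k$ for $\gamma_k\prec\gamma_i$, and $T_{ik}=0$ otherwise is lower triangular with $1$'s on the diagonal, hence invertible over $\CC$. Consequently the images of $\phi^{(1)},\dots,\phi^{(r)}$ and of $P^{(1)},\dots,P^{(r)}$ in $\Jdeal/\mathfrak m\Jdeal$ have the same $\CC$‑span, namely all of $\Jdeal/\mathfrak m\Jdeal$, because the $P^{(i)}$ generate $\Jdeal$. Thus $\Ideal+\mathfrak m\Jdeal=\Jdeal$, and Nakayama's lemma applied to the finitely generated module $\Jdeal\supseteq\Ideal$ gives $\Ideal=\Jdeal$, which is Proposition~\ref{prop:JI}. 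Together with Proposition~\ref{prop:IJ} this establishes $\Ideal=\Jdeal$; since $r$ was chosen minimal, it follows a posteriori that every $P^{(i)}$ is nonzero (Remark~\ref{rem:Pi}), which completes the proof of Theorem~\ref{thm:InvId}.
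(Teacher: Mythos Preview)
Your proof is correct and takes a genuinely different, more streamlined route than the paper's. Both arguments ultimately show that the transition matrix between the $\phi^{(i)}$ and the $P^{(j)}$ is, modulo $\mathfrak m$, lower triangular with $1$'s on the diagonal; but they reach this in different ways. The paper first extracts a minimal subfamily $(P^{(j)})_{j\in E}$ (Lemma~\ref{lem:E}), then writes $\phi^{(i)}=\sum_{j\in E}B^i_jP^{(j)}$ with holomorphic $B^i_j$ and uses the minimality of $E$ to force $B^i_i(0)=1$ and $B^i_j(0)=0$ for $j>i$; the matrix $B$ is then inverted over $\mathcal O_{\CC^d,0}$. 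You bypass Lemmas~\ref{lem:E} and~\ref{lem:still} entirely: working with all $r$ indices, you read off the triangular shape of $T$ directly from the description of the $\phi^{(i)}_\gamma$ obtained in the proof of Proposition~\ref{prop:IJ} (namely $\phi^{(i)}_{\gamma_i}=P^{(i)}$ and, for $\gamma\neq\gamma_i$, membership of $\phi^{(i)}_\gamma$ in $\langle P^{(k)}:(k,\gamma_k)<(i,\gamma)\rangle$), and then invoke Nakayama. This is conceptually cleaner and makes the role of the $\lambda$-gradation more transparent.

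The one point that does require care---and which you correctly flag---is the treatment of the infinite tail $\sum_{\gamma\succ\gamma_i}\phi^{(i)}_\gamma$. Each summand lies in $\mathfrak m\Jdeal$, but one needs to know the sum does as well. Your appeal to closedness of ideals in $\mathcal O_{\CC^d,0}$ is the right tool: since only finitely many $\lambda$-classes contain monomials of usual degree $\le N$, the tail beyond sufficiently many classes lies in $\mathfrak m^{N}$, hence the partial sums show $\phi^{(i)}-P^{(i)}-\sum_{\gamma\prec\gamma_i}\phi^{(i)}_\gamma\in\bigcap_N(\mathfrak m\Jdeal+\mathfrak m^N)=\mathfrak m\Jdeal$ by Krull's intersection theorem. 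The paper avoids this step by working from the start with a holomorphic expression $\phi^{(i)}=\sum_j B^i_jP^{(j)}$, at the cost of the auxiliary lemmas on $E$. Either trade-off is legitimate; your approach has the advantage of making the a posteriori equality $E=\{1,\dots,r\}$ of Remark~\ref{rem:Etout} unnecessary for the proof itself.
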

	
	In order to prove this proposition, we pick a subset $E$ of $\{1, \ldots, r\}$ such that the family $P^{(i)}$ is a mimimal set of generators of $\Jdeal$ and such that $P^{(i)} \in \langle P^{(j)} \rangle_{j \leq i, j \in E}$. We then show that $\langle P^{(i)} \rangle_{i \in E} = \langle \phi^{(i)} \rangle_{i \in E}$ which concludes the proof.
	
	\begin{lem}\label{lem:E}
		There exists a subset $E \subseteq \{1, \ldots, r\}$ such that the family $(P^{(j)})_{j \in E}$ is a minimal set of generators of $\Jdeal$ and such that for all $i \in \{1, \ldots, r\}$:
		$$
		P^{(i)} \in \langle P^{(j)} \rangle_{j \leq i, j \in E}.
		$$
	\end{lem}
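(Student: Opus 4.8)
The plan is to build $E$ greedily by going through the indices $i = 1, 2, \ldots, r$ in order, maintaining the invariant that at each stage the already-selected indices generate the same ideal as $\langle P^{(1)}, \ldots, P^{(i)}\rangle$. Concretely, I would set $E_0 = \emptyset$ and, having constructed $E_{i-1} \subseteq \{1, \ldots, i-1\}$ with $\langle P^{(j)}\rangle_{j \in E_{i-1}} = \langle P^{(1)}, \ldots, P^{(i-1)}\rangle$, decide whether to put $i$ into $E$: if $P^{(i)} \in \langle P^{(j)}\rangle_{j \in E_{i-1}}$ already, set $E_i = E_{i-1}$; otherwise set $E_i = E_{i-1} \cup \{i\}$. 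In either case the invariant $\langle P^{(j)}\rangle_{j \in E_i} = \langle P^{(1)}, \ldots, P^{(i)}\rangle$ is preserved, and moreover by construction $P^{(i)} \in \langle P^{(j)}\rangle_{j \le i,\, j \in E}$ holds at step $i$ and is never disturbed afterwards since later steps only add generators with larger index. Taking $E = E_r$ gives $\langle P^{(j)}\rangle_{j \in E} = \langle P^{(1)}, \ldots, P^{(r)}\rangle = \Jdeal$ and the required containment $P^{(i)} \in \langle P^{(j)}\rangle_{j \le i,\, j \in E}$ for every $i$.

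It remains to arrange that $(P^{(j)})_{j \in E}$ is a \emph{minimal} set of generators of $\Jdeal$. The greedy construction does not automatically guarantee this — a priori one of the selected $P^{(j)}$ could later become redundant — so the second step is to prune $E$ down to a minimal generating subset $E' \subseteq E$. The key point is that pruning preserves the property $P^{(i)} \in \langle P^{(j)}\rangle_{j \le i,\, j \in E'}$: by Nakayama's lemma, a subset of $\{P^{(j)}\}_{j \in E}$ generates $\Jdeal$ if and only if the corresponding residue classes span $\Jdeal / \mathfrak{m}_{\CC^d,0}\Jdeal$ as a $\CC$-vector space, and since the $P^{(j)}$ for $j \in E$ are $\lambda$-homogeneous of respective $\lambda$-degrees $\gamma_j$ (which are nondecreasing in $j$ by the ordering of coordinates fixed before Remark \ref{rem:ordcoord}), a redundant $P^{(j)}$ is expressible in terms of $P^{(k)}$ with $\lambda$-degree strictly smaller, hence with $k < j$. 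Thus one can remove redundant generators one at a time, always removing the one of largest index among the redundant ones; after removing $P^{(j)}$, any earlier relation $P^{(i)} = \sum_{k \le i,\, k \in E} c_k P^{(k)}$ can be rewritten by substituting the expression for $P^{(j)}$ in terms of strictly-smaller-degree (hence smaller-index) generators, so the triangular containment is maintained throughout.

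The main obstacle is precisely the compatibility between minimality (a global condition on $\Jdeal$) and the triangular condition $P^{(i)} \in \langle P^{(j)}\rangle_{j \le i,\, j \in E}$ (which must hold for \emph{all} $i \in \{1, \ldots, r\}$, not just those in $E$): one must check that the pruning step can always be performed so as not to break an already-established triangular relation. This is handled by the $\lambda$-homogeneity observation above — redundancy among $\lambda$-homogeneous elements of $\Jdeal$ can be witnessed using only generators of strictly smaller $\lambda$-degree, which because of our coordinate ordering means strictly smaller index — so every substitution needed in the pruning respects the index ordering. Once this is in place, the routine bookkeeping of the greedy construction and the finite sequence of prunings completes the proof.
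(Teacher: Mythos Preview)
Your greedy construction in step 1 is fine and does give the triangular containment $P^{(i)} \in \langle P^{(j)}\rangle_{j\le i,\,j\in E}$. The gap is in step 2: the sentence ``a redundant $P^{(j)}$ is expressible in terms of $P^{(k)}$ with $\lambda$-degree strictly smaller, hence with $k<j$'' is not correct, because the $\gamma_j$ can repeat (the paper's own illustration has $\gamma_2=\gamma_3$). If $P^{(j)}$ is redundant in $\Jdeal/\mathfrak m\Jdeal$, then by $\lambda$-homogeneity its image lies in the span of the $\bar P^{(k)}$ with $\gamma_k=\gamma_j$, not with $\gamma_k<\gamma_j$; some of those $k$ may well be larger than $j$. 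Worse, for any $j$ that the greedy procedure actually put into $E$ one has by construction $P^{(j)}\notin\langle P^{(k)}\rangle_{k<j}$, so a redundancy of $P^{(j)}$ inside the greedy $E$ can \emph{never} be witnessed using only smaller indices. Your pruning rule ``remove the redundant generator of largest index'' therefore cannot be justified by the degree argument you give.

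The good news is that step 2 is unnecessary: your greedy $E$ is already minimal. Suppose not, and let $j\in E$ be redundant. Taking $\lambda$-degree $\gamma_j$ parts of a relation $P^{(j)}=\sum_{k\in E,\,k\neq j}u_kP^{(k)}$ gives
\[
P^{(j)}=\sum_{\substack{k\in E,\ k\neq j\\ \gamma_k=\gamma_j}}u_k(0)P^{(k)}+\sum_{\substack{l\in E\\ \gamma_l<\gamma_j}}w_lP^{(l)}.
\]
If every $u_k(0)$ with $k>j$ vanishes, the right-hand side uses only indices $<j$, contradicting the greedy choice of $j$. Otherwise pick the largest $k_0>j$ with $u_{k_0}(0)\neq 0$ and solve for $P^{(k_0)}$: every term on the right now has index $<k_0$, contradicting the greedy choice of $k_0$. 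So no pruning is needed, and step 1 alone proves the lemma.

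For comparison, the paper's proof is organized differently: rather than building $E$ greedily, it takes among all minimal generating subsets of $\{P^{(1)},\ldots,P^{(r)}\}$ one with smallest index-sum $\sum_{j\in E}j$, and shows by a swap argument that the triangular property must hold (if some $P^{(i)}$ needed a $P^{(k)}$ with $k>i$, one could replace $k$ by $i$ in $E$ and decrease the sum). Your approach and the paper's are essentially dual: you fix the triangular property first and then have to argue minimality, while the paper fixes minimality first and argues triangularity; both hinge on the same exchange step, namely that a relation among $\lambda$-homogeneous $P^{(j)}$'s of equal degree lets you swap a large index for a smaller one.
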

	
	\begin{proof}
		Set $\mathcal{E} = \left\{F \subset \{1, \ldots, r\}, (P^{(j)})_{j \in F} \text{ is a minimal set of generators of } \Jdeal \right\}$. The map $S : \mathcal{E} \to \ZZ$ defined by $\phi : F \mapsto \sum_{j \in F} j$ induces a partial order on $\mathcal{E}$, and let $E \in \mathcal{E}$ be minimal with respect to this partial order.
		
		Then $(P^{(j)})_{j \in E}$ generates $\Jdeal$ by definition, and we claim that it satisfies the second condition of the lemma.
		
		Let us assume that there exists $i \notin E$ such that $P^{(i)} \notin \langle P^{(j)} \rangle_{j \leq i, j \in E}$ (we assume $i \notin E$ as the second part of the lemma obviously holds for $i \in E$).
		
		As $\Jdeal = \langle P^{(j)} \rangle_{j \in E}$, we can decompose $P^{(i)} = \sum_{j \in E} u_j P^{(j)}$. We can first look at the parts of $\lambda$-degree $\gamma_i$ on both sides, and we still get a relation of the same form. Moreover the assumption shows that there exists $k > i, k \in E$ such that $u_k \neq 0$. However as $P^{(k)}$ has degree at least $\gamma_i$, the part of $\lambda$-degree $\gamma_i$ in $u_k P^{(k)}$ can only be $u_k (0) P^{(k)}$.
		
		But then we can write:
		$$
		P^{(k)} = \frac{1}{u_k} \left(P^{(i)} -  \sum_{j \in E, j \neq k} u_j P^{(j)}\right).
		$$
		
		The set $E \setminus \{k\} \cup \{i\}$ is thus also in $\mathcal{E}$ and is smaller than $E$ according to the partial order, which is a contradiction.
	\end{proof}
	
	This lemma enables us to keep the properties described in remark \ref{rem:ordcoord} even if we removed some coordinates.
	
	\begin{lem}\label{lem:still}
		Let $E \subseteq \{1, \ldots, r\}$ be a subset that satisfies the property of lemma \ref{lem:E}.
		
		Then we still have as in remark \ref{rem:ordcoord}:
		$$
		P^{(i)} \circ F \in \langle P^{(j)} \rangle_{j \leq i, j \in E}.
		$$
		for all $i \in \{1, \ldots, r\}$
		
		Moreover, we also have that for all $i < j$, $\phi^{(i)}_{\gamma_j} \in \langle P^{(k)} \rangle_{(k, \gamma_k) \leq (i, \gamma_j), k \in E}$.
	\end{lem}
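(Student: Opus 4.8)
The plan is to deduce both statements from Remark \ref{rem:ordcoord}, which asserts exactly the same two containments but with the generators $P^{(j)}$ (resp. $P^{(k)}$) ranging over all of $\{1,\dots,r\}$ rather than only over $E$. The mechanism is substitution: wherever a generator $P^{(j)}$ occurs in one of those containments, Lemma \ref{lem:E} lets us rewrite it as an $\mathcal{O}_{\CC^d,0}$-combination of the $P^{(l)}$ with $l\le j$ and $l\in E$. What needs to be checked is that this rewriting keeps the indices inside the prescribed ranges, and this is precisely where the preliminary reordering of the coordinates --- chosen so that $i<j$ implies $\gamma_i\le\gamma_j$ --- is used. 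Throughout, the cases where some $P^{(i)}=0$ (i.e. $\gamma_i=+\infty$) render the relevant containment trivial and may be set aside.

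For the first assertion I would start from $P^{(i)}\circ F\in\langle P^{(j)}\rangle_{j\le i}$ (Remark \ref{rem:ordcoord}). For each $j\le i$, Lemma \ref{lem:E} gives $P^{(j)}\in\langle P^{(l)}\rangle_{l\le j,\,l\in E}$, and every such $l$ satisfies $l\le j\le i$. Summing these inclusions yields $P^{(i)}\circ F\in\langle P^{(l)}\rangle_{l\le i,\,l\in E}$, which is the claim.

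The second assertion is where the (minor) bookkeeping lies. I would start from $\phi^{(i)}_{\gamma_j}\in\langle P^{(k)}\rangle_{(k,\gamma_k)\le(i,\gamma_j)}$ (Remark \ref{rem:ordcoord}, valid for $i<j$), and again substitute $P^{(k)}\in\langle P^{(l)}\rangle_{l\le k,\,l\in E}$ for each occurring $k$. The point to verify is that $l\le k$ together with $(k,\gamma_k)\le(i,\gamma_j)$ force $(l,\gamma_l)\le(i,\gamma_j)$. Since the coordinates are ordered so that $l\le k$ implies $\gamma_l\le\gamma_k$, I would split into the two ways $(k,\gamma_k)\le(i,\gamma_j)$ can hold: if $\gamma_k\prec\gamma_j$ then $\gamma_l\le\gamma_k\prec\gamma_j$, hence the pair comparison holds; if $\gamma_k=\gamma_j$ and $k\le i$, then $\gamma_l\le\gamma_j$, and in the boundary case $\gamma_l=\gamma_j$ the inequality $l\le k\le i$ supplies the remaining condition. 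Hence $\phi^{(i)}_{\gamma_j}\in\langle P^{(l)}\rangle_{(l,\gamma_l)\le(i,\gamma_j),\,l\in E}$.

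I do not expect a genuine obstacle here: both parts are pure index manipulation built on top of Remark \ref{rem:ordcoord} and Lemma \ref{lem:E}. The only spot requiring care is the case analysis just sketched, whose validity rests entirely on the earlier normalisation $i<j\Rightarrow\gamma_i\le\gamma_j$; without that ordering, cutting the generating set down to $E$ could introduce a $P^{(l)}$ of too large an index and break the containment.
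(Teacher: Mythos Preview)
Your proposal is correct and follows exactly the same approach as the paper: start from the containments of Remark \ref{rem:ordcoord} and substitute each $P^{(k)}$ via Lemma \ref{lem:E}. You actually supply more detail than the paper does---in particular the case analysis verifying $(l,\gamma_l)\le(i,\gamma_j)$, which the paper dispatches with ``we can conclude similarly.''
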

	
	\begin{proof}
		We know that $P^{(i)} \circ F \in \langle P^{(j)} \rangle_{j \leq i}$. Moreover, for all $k \notin I$, lemma \ref{lem:E} shows that $P^{(k)} \in \langle P^{(j)} \rangle_{j \leq k, j \in E}$, which implies that $P^{(i)} \circ F \in \langle P^{(j)} \rangle_{j \leq i, j \in E}$.
		
		We also get from remark \ref{rem:ordcoord} that $\phi^{(i)}_{\gamma_j} \in \langle P^{(k)} \rangle_{(k, \gamma_k) \leq (i, \gamma_j)}$, so we can conclude similarly that $\phi^{(i)}_{\gamma_j} \in \langle P^{(k)} \rangle_{(k, \gamma_k) \leq (i, \gamma_j), k \in E}$.
	\end{proof}

	\begin{proof}[Proof of proposition \ref{prop:JI}]
		We can first pick a subset $E \subseteq \{1, \ldots, r\}$ that satisfies the conditions of lemma \ref{lem:E}.
		
		As we have already proved that $\Ideal \subseteq \Jdeal$, for all $i \in \{1, \ldots, r\}$ we can decompose $\phi^{(i)}$ in the following way:
		\begin{equation}\label{eq:B}
			\phi^{(i)} = \sum_{j \in E} B^{i}_{j} P^{(j)}
		\end{equation}
		with $B^{i}_{j}$ holomorphic functions.
		
		We claim that the square matrix $B = (B^{i}_{j})_{(i, j) \in E \times E}$ is invertible. In order to do so, we just need to prove that its determinant is invertible, which occurs if and only if its constant term is non zero. We can therefore only check if the matrix with the constant terms $B_0 = (B^{i}_{j} (0))_{(i, j) \in E \times E}$ is invertible.
		
		Let $i \in E$, if we look at the parts of $\lambda$-degree $\gamma_i$ in \eqref{eq:B}, we get:
		$$
		P^{(i)} = \left(\sum_{j \in E} B^{i}_{j} P^{(j)}\right)_{\gamma_i}.
		$$
		If $B^{i}_{i} (0) \neq 1$, then we can express $P^{(i)}$ in terms of the other polynomials $P^{(j)}, j \in E$ which would contradict the minimality of $E$, so $B^{i}_{i} (0) = 1$.
		
		Now let $i \in E$ and $j > i$, $j \in E$. If we look at the parts of $\lambda$-degree $\gamma_j$ in \eqref{eq:B}, we get:
		$$
		\phi^{(i)}_{\gamma_j} = \left(\sum_{k \in E} B^{i}_{k} P^{(k)}\right)_{\gamma_j}.
		$$
		
		However we know (cf. lemma \ref{lem:still}) that $\phi^{(i)}_{\gamma_j} \in \langle P^{(k)} \rangle_{(k, \gamma_k) \leq (i, \gamma_j), k \in E}$, so we can also decompose:
		$$
		\phi^{(i)}_{\gamma_j} = \sum_{k \in E, (k, \gamma_k) \leq (i, \gamma_j)} U^{i}_{k} P^{(k)}.
		$$
		
		We can thus write:
		$$
		B^{i}_{j} (0) P^{(j)} = \sum_{k \in E, (k, \gamma_k) \leq (i, \gamma_j)} U^{i}_{k} P^{(k)} - \left(\sum_{k \in E, k \neq j} B^{i}_{k} P^{(k)}\right)_{\gamma_j}.
		$$
		As $(j, \gamma_j) > (i, \gamma_j)$, we observe that $P^{(j)}$ does not appear on the right, so $B^{i}_{j} (0) = 0$ as the converse would contradict the minimality of $E$.
		
		We have thus proved that $B_0$ is a lower triangular matrix with $1$'s on the diagonal, which proves that it is invertible.
		
		Let $\beta = (\beta^i_j)_{(i, j) \in E \times E}$ be the inverse of $B$, we get:
		$$
		P^{(i)} = \sum_{j \in E} \beta^i_j \phi^{(j)}
		$$	
		for all $i \in E$.
		
		We then conclude that $\Jdeal = \langle P^{(i)} \rangle_{i \in E} = \langle \phi^{(i)} \rangle_{i \in E} \subseteq \Ideal$.
	\end{proof}
	
	\begin{rem}\label{rem:Etout}
		We have shown that $\Ideal$ is generated by the $P^{(i)}, i \in E$. As the minimal number of generators of $\Ideal$ is $r$, it proves that we actually have $E = \{1, \ldots, r\}$ and that $P^{(i)} \neq 0$ for all $i$ (which also implies that $\gamma_i \in \Gamma$ for all $i$).
	\end{rem}
	
	\begin{ex}\label{ex:figure}
		
		In figure \ref{fig:ill}, the point on the $\gamma$-th row and the $i$-th column represents the weighted homogeneous polynomial $\phi^{(i)}_{\gamma}$.
		
		In the proof of proposition \ref{prop:IJ}, we show that the ideal $\Jdeal_{2, \gamma_5}$ is generated by the polynomials $P^{(i)}$ that are ``in the ideal'', that is $P^{(1)}$, $P^{(2)}$, $P^{(3)}$ and $P^{(4)}$.
		
		The lemmas \ref{lem:E} and \ref{lem:still} show that we can pick a minimal set of generators while still preserving the property that $\Jdeal_{2, \gamma_5}$ is generated by some of the $P^{(i)}$'s that are contained in it.
		
		Finally, in the proof of proposition \ref{prop:JI}, we show that this implies that we do not need the other polynomials ($P^{(5)}$ and higher) to decompose $\phi^{(2)}_{\gamma_5}$. If we write:
		$$
		\phi^{(2)} = \sum_{j \in E} B^{2}_{j} P^{(j)}
		$$
		as in \eqref{eq:B}, this thus shows that $B^{2}_5 (0) = 0$ when we compare the parts of $\lambda$-degree $\gamma_5$.
	\end{ex}

	\begin{figure}[hb]
		\def\svgwidth{0.6\columnwidth}
		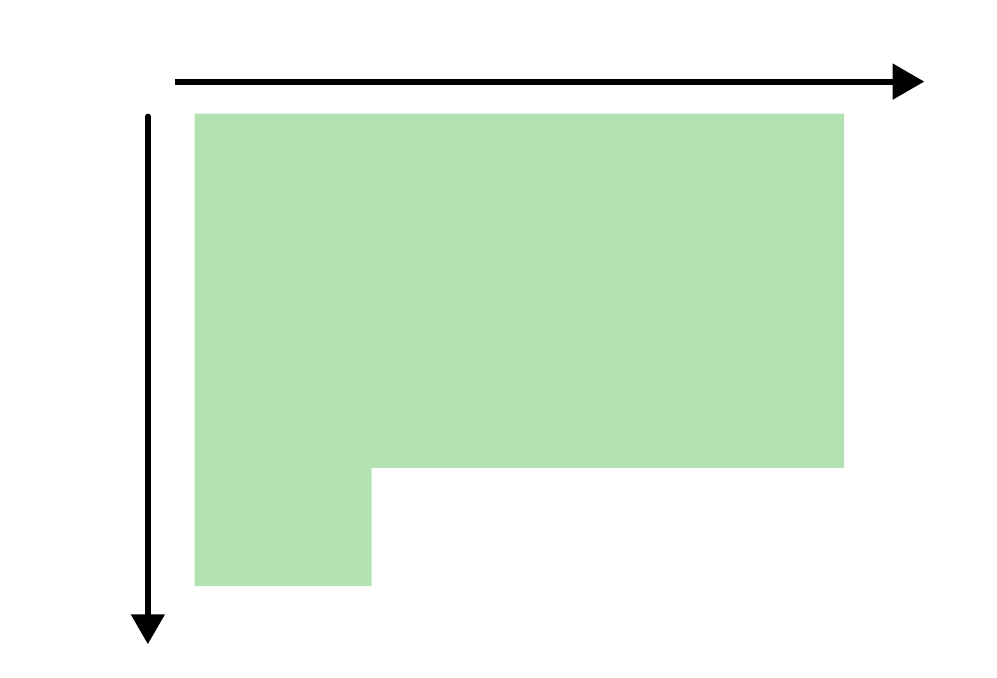
		
		\caption{Illustration of the proof of prop. \ref{prop:JI} (see details in ex \ref{ex:figure}).}\label{fig:ill}
	\end{figure}

	\bibliography{../../bibliographie}

\begin{thebibliography}{10}

\bibitem{Abate2001}
Marco Abate.
\newblock An introduction to hyperbolic dynamical systems.
\newblock 2001.

\bibitem{Berteloot06}
François Berteloot.
\newblock M\'ethodes de changement d'\'echelles en analyse complexe.
\newblock {\em Ann. Fac. Sci. Toulouse Math. (6)}, 15(3):427--483, 2006.

\bibitem{BAKR}
Carles Bivià-Ausina, Konstantinos Kourliouros, and Maria Aparecida~Soares
  Ruas.
\newblock Bruce-roberts numbers and quasihomogeneous functions on analytic
  varieties, 2022.

\bibitem{CMS}
Cesar Camacho, H.~Movasati, and Bruno Scardua.
\newblock The moduli of quasi-homogeneous stein surface singularities.
\newblock {\em Journal of Geometric Analysis}, 19:244--260, 04 2009.

\bibitem{Dulac1912}
H.~Dulac.
\newblock Solutions d'un système d'équations différentielles dans le
  voisinage de valeurs singulières.
\newblock {\em Bulletin de la Société Mathématique de France}, 40:324--383,
  1912.

\bibitem{FavreRuggiero}
Charles {Favre} and Matteo {Ruggiero}.
\newblock {Normal surface singularities admitting contracting automorphisms}.
\newblock {\em {Ann. Fac. Sci. Toulouse, Math. (6)}}, 23(4):797--828, 2014.

\bibitem{Gunning90}
Robert~C. Gunning.
\newblock {\em Introduction to holomorphic functions of several variables.
  {V}ol. {I}}.
\newblock The Wadsworth \& Brooks/Cole Mathematics Series. Wadsworth \&
  Brooks/Cole Advanced Books \& Software, Pacific Grove, CA, 1990.
\newblock Function theory.

\bibitem{MNTU}
S.~Morosawa, Y.~Nishimura, M.~Taniguchi, and T.~Ueda.
\newblock {\em Holomorphic dynamics}, volume~66 of {\em Cambridge Studies in
  Advanced Mathematics}.
\newblock Cambridge University Press, Cambridge, 2000.
\newblock Translated from the 1995 Japanese original and revised by the
  authors.

\bibitem{Muller}
Gerd {M\"uller}.
\newblock {Actions of complex Lie groups on analytic
  \({\mathbb{C}}\)-algebras}.
\newblock {\em {Monatsh. Math.}}, 103:221--231, 1987.

\bibitem{OrlikWagreich}
Peter Orlik and Philip Wagreich.
\newblock Isolated singularities of algebraic surfaces with $\mathbb{C}^{\ast}$
  action.
\newblock {\em Annals of Mathematics}, 93:205, 1971.

\bibitem{RosayRudin}
Jean-Pierre Rosay and Walter Rudin.
\newblock Holomorphic embeddings of {$\bold C$} in {$\bold C^n$}.
\newblock In {\em Several complex variables ({S}tockholm, 1987/1988)},
  volume~38 of {\em Math. Notes}, pages 563--569. Princeton Univ. Press,
  Princeton, NJ, 1993.

\bibitem{Saito1971}
Kyoji Saito.
\newblock Quasihomogene isolierte singularitäten von hyperflächen.
\newblock {\em Inventiones mathematicae}, 14:123--142, 1971.

\bibitem{Stern57}
Shlomo Sternberg.
\newblock Local contractions and a theorem of poincaré.
\newblock {\em American Journal of Mathematics}, 79(4):809--824, 1957.

\bibitem{Wahl}
Jonathan Wahl.
\newblock A characteristic number for links of surface singularities.
\newblock {\em Journal of the American Mathematical Society}, 3(3):625--637,
  1990.

\end{thebibliography}
	\bibliographystyle{plain}
	
\end{document}